\numberwithin{equation}{section}
\theoremstyle{plain}
\newtheorem{theorem}{Theorem}[section]
\theoremstyle{definition}
\newtheorem{definition}{Definition}[section]
\newtheorem{example}{Example}[section]
\newtheorem{remark} [theorem] {Remark}
\newcommand{\Q}{\mathbb{Q}}
\newcommand{\F}{\mathbb{F}}
\newcommand{\Proj}{\mathbb{P}}
\DeclareMathOperator{\Pf}{Pf}
\DeclareMathOperator{\wts}{wts}
\DeclareMathOperator{\Bl}{Bl}
\newcommand*\circled[1]{\tikz[baseline=(char.base)]{
		\node[shape=circle,draw,inner sep=1pt] (char) {#1};}}
\definecolor{lightgray}{gray}{0.9}
\begin{document}
	\makeatletter
	\@namedef{subjclassname@2020}{%
		\textup{2020} Mathematics Subject Classification}
	\makeatother
	
	\title{High-pliability Fano hypersurfaces}
	
	\author{Livia Campo}
	
	\address{School of Mathematics \\ 		
		Korea Institute for Advanced Study (KIAS) \\
		85 Hoegiro, Dongdaemun-gu \\
		Seoul, 02455 \\
		Republic of Korea}
	
	\email{liviacampo@kias.re.kr}

	\subjclass[2020]{14E05, 14E30, 14J30, 14J45}
	
	\keywords{Fano 3-fold, Compound Du Val singularities, Pliability}
	
	\thanks{The author would like to thank Tiago Guerreiro for conversations and comments during the development of this work. The author is kindly supported by Korea Institute for Advanced Study, grant No. MG087901.}

	\begin{abstract} 
		We show that five of Reid's Fano 3-fold hyperurfaces containing at least one compound Du Val singularity of type $cA_n$ have pliability at least two. 
		The two elements of the pliability set are the singular hypersurface itself, and another non-isomorphic Fano hypersurface of the same degree, embedded in the same weighted projective space, but with different compound Du Val singularities. 
		The birational map between them is the composition of two birational links initiated by blowing up two Type I centres on a codimension 4 Fano 3-fold of $\Proj^2 \times \Proj^2$-type having Picard rank 2. 
	\end{abstract}
	
\maketitle

\section{Introduction}

We work over the complex numbers. 
In recent years, algebraic geometers have gained a new perspective on the birational classification of Fano varieties by looking at them through the lenses of the  Minimal Model Program (MMP)\cite{BCHM}. 
The strategy is to describe the birational classes of Fano varieties by studying "minimal" members of such families, called \textit{Mori fibre spaces}, that are the outcomes of the MMP. 
For 3-dimensional Fano varieties, these outcomes are not unique, albeit birational, so it makes sense to understand how they relate to one another using techniques coming from the Sarkisov Program \cite{Corti95,HaconMCKernanSarkisovProgram} and variation of Geometric Invariant Theory (vGIT) \cite{Corti2ray,BrownZucconi,CampoSarkisov}. 
The set of all the Mori fibre spaces that are birational to a given one has been formalised by Corti in the definition of \textit{pliability} (also in \cite[Foreword 4.6]{ExplicitBirGeom}).
\begin{definition}[Definition 1.2 of \cite{cortimella}]
	Given a Mori fibre space $X \rightarrow S$, the \textit{pliability} of $X$ is the set
	\begin{equation*}
	\mathcal{P}(X) \coloneqq \{ \text{Mfs } Y \rightarrow T \; | \; X \text{ is birational to } Y \} \, / \, \text{square equivalence}
	\end{equation*}
	where two Mori fibre spaces are \textit{square equivalent} if there are two birational maps $X \dashrightarrow Y$ and $S \dashrightarrow T$ such that the maps induced on the generic fibres $X_f \dashrightarrow Y_f$ are biregular. 
\end{definition}

In this paper we focus on $\Q$-factorial terminal 3-dimensional Fano varieties (3-folds) having Fano index 1. They can be embedded via Graded Rings methods into suitable weighted projective spaces (see \cite{AltinokBrownReidK3,ReidGradedRBirGeom}, and \cite{grdb} for the list of Hilbert series associated to terminal Fano 3-folds). When we talk about \textit{codimension} of a Fano 3-fold, we refer to its codimension with respect to its embedding into a weighted projective space $w\Proj^n$, for some $n >0$. 

Many efforts have been devoted to studying the pliability of Fano 3-folds. For instance, Reid's "famous 95" Fano hypersurfaces \cite{ReidCanonical3folds,IanoFletcher} have been proven to have pliability $|\mathcal{P}| =1$ both in the general quasi-smooth \cite{CPR} case and for all quasi-smooth members \cite{CheltsovParkRigidHypersurfaces}. 
Of the 85 Fano 3-fold codimension 2 complete intersections, 19 have pliability $|\mathcal{P}| =1$ \cite{IskovskikhPukhlikov,OkadaCI,AhmadinezhadZucconiCI}. In codimension 3, only 3 families have $|\mathcal{P}| =1$ \cite{AhmadinezhadOkadaPfaff}. 
These are the Fano 3-folds for which there is a structure theorem describing their equations (see also \cite{EisenbudBuchsbaum}). Starting from Fano 3-folds in codimension 4, a more sophisticated machinery is needed to retrieve their equations, called \textit{unprojection} \cite{KustinMiller,PapadakisComplexes,T&Jpart1}, that pinpoints between two and four distinguished deformation families (evocatively called Tom and Jerry) for each Hilbert series (see \ref{Setting and notation} below for details). 
These have different Euler characteristics; we call \textit{second Tom type} the Tom family with smaller Euler characteristic. 

Here we focus on those codimension 4 Fano 3-folds having Picard rank 2 as in \cite{BrownKasprzykQureshiP2xP2}, whose equations resemble the equations of the Segre embedding of $\Proj^2 \times \Proj^2$: these are realised as $2 \times 2$ minors of graded $3 \times 3$ matrices, and second Tom type Fano 3-folds always present a $\Proj^2 \times \Proj^2$-type structure. 
Such $\Proj^2 \times \Proj^2$-type Fano 3-folds are not Mori fibre spaces, but we can run birational links in the same spirit of the Sarkisov Program by performing a toric vGIT of a blow up of $w\Proj^n$ similarly to \cite{HamidPliabilityCox,BrownZucconi,CampoSarkisov}. 
We are most interested in the endpoints of these birational links: in fact, we restrict to those terminating with a Fano 3-fold hypersurface in a weighted $\Proj^4$. The possible hypersurfaces we obtain are listed in Table \ref{Endpoints table} below. 
We consider only codimension 4 Fano 3-folds of $\Proj^2 \times \Proj^2$-type and of second Tom type that allow for two distinct birational links both terminating with a Fano hypersurface. 
We compare the two hypersurfaces thus obtained; they are factorial (see Theorem \ref{factorial endpoints} below) and have terminal singularities, both cyclic quotient and compound Du Val \cite{YPGReid}. 

We use this approach to prove the following. 
\begin{theorem} \label{Pliability theorem}
	The 3-dimensional Fano hypersurfaces $\mathcal{N}\textdegree 1,2,3,4,5$ of Reid's 95 hypersurfaces having compound Du Val singularities as in Table \ref{Endpoints table} have pliability $|\mathcal{P}| \geq 2$.
\end{theorem}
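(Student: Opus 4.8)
The plan is to produce, for each family in Table~\ref{Endpoints table}, a second Mori fibre space that is birational but not square-equivalent to the given hypersurface; since every variety in play is a Fano $3$-fold fibred over a point, I first note that square-equivalence here reduces to abstract isomorphism. Indeed, if $X\to\Spec\C$ and $X'\to\Spec\C$ are two such Mori fibre spaces, then the generic fibres are $X$ and $X'$ themselves, so they are square-equivalent exactly when $X\cong X'$. For each of the five families I would start from the associated codimension $4$ Fano $3$-fold $Z$ of $\Proj^2\times\Proj^2$-type and second Tom type, realised as the $2\times 2$ minors of a graded $3\times 3$ matrix inside a weighted projective space; this $Z$ has Picard rank $2$ and carries two quotient singularities that are Type I centres.

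The first main step is to run the two birational links. Following \cite{BrownZucconi,CampoSarkisov,HamidPliabilityCox}, I would blow up one of the two Type I centres on $Z$ and play the resulting $2$-ray game — a toric variation of GIT producing a chain of flips and flops followed by a divisorial contraction — while tracking the proper transform of $Z$; the link terminates with a Fano hypersurface $X$ in a weighted $\Proj^4$. Blowing up the second Type I centre initiates the analogous link, terminating with a Fano hypersurface $X'$ in the same weighted $\Proj^4$ and of the same degree, so that $X$ and $X'$ lie in the same one of Reid's $95$ families. By construction both are birational to $Z$, hence to one another, and the composite $X\dashrightarrow X'$ realises the stated birational map. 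At this point I would invoke Theorem~\ref{factorial endpoints} to guarantee that $X$ and $X'$ are factorial; together with terminality and Picard rank $1$ this makes each of them a genuine Mori fibre space over a point.

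The decisive step is to show that $X$ and $X'$ are not square-equivalent, which by the reduction above means showing $X\not\cong X'$. I would do this by computing and comparing their singularities: both acquire compound Du Val points of type $cA_n$ along the images of the exceptional loci of the two links, but the local analytic models produced by the two different Type I centres carry different indices $n$, as can be extracted from the explicit equations recorded in Table~\ref{Endpoints table}. Since the analytic isomorphism type of the singular germs is an isomorphism invariant and $cA_n$, $cA_m$ are distinct germs for $n\neq m$, the differing $cA_n$ types force $X\not\cong X'$. Hence $[X]$ and $[X']$ are two distinct classes in $\mathcal{P}(X)$, giving $|\mathcal{P}|\geq 2$.

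The main obstacle is twofold. The computational core is the bookkeeping of the two $2$-ray games: correctly locating the centres, tracking the weights appearing after each flip or flop, and checking that each link genuinely terminates with a hypersurface rather than a higher-codimension Fano or a strict Mori fibre space over a positive-dimensional base; this must be carried out family by family. The more essential point is to pin down the $cA_n$ types precisely enough to separate $X$ from $X'$, and to be sure these indices are \emph{intrinsic} invariants of the germs rather than artefacts of a chosen coordinate presentation — it is exactly this robustness that makes the non-isomorphism, and therefore the lower bound on pliability, unconditional.
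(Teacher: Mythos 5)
Your proposal is correct and follows essentially the same route as the paper: reduce square equivalence over a point to abstract isomorphism, run the two birational links from the two Type I centres of the second-Tom-type codimension $4$ Fano to land on two hypersurfaces of the same degree in the same weighted $\Proj^4$, and separate them by their compound Du Val singularity types (the paper packages this last step as Theorem \ref{endpoints not isomorphic}, comparing a $cA_{d_4-1}$ point on one endpoint against a $cA_{d_2-1}$ point on the other, with factoriality supplied by Theorem \ref{factorial endpoints}). Your concern about the intrinsic nature of the $cA_n$ index is well placed but standard: it is an analytic invariant of the germ, read off from the general hyperplane section, exactly as the paper implicitly uses it.
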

The pliability of quartic Fano 3-folds having one compound Du Val singularity has been studied in \cite{cortimella}, where the authors prove that the pliability of such singular quartic 3-fold is exactly 2: it contains only another Mori fibre space, the quasi-smooth complete intersection of a cubic and a quartic in a weighted $\Proj^5$. 

In this paper we consider more singular Fano hypersurfaces whose singularities arise in the process of running a birational link starting from $X$ of $\Proj^2 \times \Proj^2$-type. 
In particular, we encounter a singular quartic Fano 3-fold too (i.e.~ Reid's $\mathcal{N}\textdegree 1$ and GRDB ID \#20521, see entry \#11125 of Table \ref{Endpoints table} below): however, our has three compound Du Val singularities instead of one as in \cite{cortimella} (one of which is the $cA_2$ singularity of \cite{cortimella}). 
We still retrieve the complete intersection $Y_{3,4}$ of \cite{cortimella} (see Table \cite[Entry \#11125]{BigTableLinks}), which is now not quasi-smooth but has compound Du Val singularities. 
In addition, we find that the pliability set of our singular quartic Fano also contains another, non-isomorphic, quartic Fano 3-fold having different compound Du Val singularities (cf also \cite{OkadaMFStructuresPartII,OkadaMFStructuresPartIII}). 

This is a recurrent phenomenon in the picture we study in this paper. 
In fact, the elements in the pliability sets of the singular Fano hypersurfaces we obtain are the hypersurface itself, and the same hypersurface with different compound Du Val singularities.

The main ingredient to prove Theorem \ref{Pliability theorem} is understanding the behaviour of the birational links starting from a codimension 4 Fano 3-fold $X$ of $\Proj^2 \times \Proj^2$-type, and how they are affected by $X$ having $\rho_X=2$. 
The following theorem summarises Theorems \ref{sim div contr} and \ref{cons div contr} below.
\begin{theorem}
	Let $X$ be a codimension 4 Fano 3-fold in second Tom format, and let $p \in X$ be a Type I centre. Suppose that the birational link centred at $p$ terminates with a Fano 3-fold hypersurface $X'$. Then, the link terminates with two divisorial contractions.
\end{theorem}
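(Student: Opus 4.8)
The plan is to present the link as the restriction to $X$ of a rank-$2$ toric $2$-ray game and then to control its final steps by a Picard-rank count combined with an analysis of how the terminal toric contraction meets the $3$-fold. First I would fix the ambient geometry. Since $p$ is a Type I centre it is a terminal cyclic quotient singularity, and it admits a Kawamata blow-up $\sigma \colon Y \to X$ extracting a single irreducible exceptional divisor $E$; hence $\rho_Y = \rho_X + 1 = 3$. I realise $\sigma$ inside a rank-$2$ toric variety $\mathcal{T}$ (the weighted blow-up of the ambient weighted projective space at $p$), so that $Y$ is the proper transform of $X$ and $\rho(\mathcal{T}) = 2$. The two-ray game on $\mathcal{T}$ is then a genuine rank-$2$ vGIT variation: one boundary ray of its movable cone recovers $\sigma$, and crossing the finitely many intermediate chambers yields a chain of toric flips and flops terminating in a contraction $\mathcal{T}_{\mathrm{end}} \to \mathcal{Z}$. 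Restricting each step to the proper transform of $X$ produces the birational link.

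Next I would run the Picard-rank bookkeeping, which is the core of the statement. Flips and flops preserve the Picard rank of the $3$-fold, whereas each divisorial contraction drops it by one, and a fibration would terminate the link at a Mori fibre space over a positive-dimensional base. By hypothesis the link terminates with a Fano hypersurface $X' \subset w\Proj^4$, which is factorial (Theorem \ref{factorial endpoints}) and therefore has $\rho_{X'} = 1$. Starting the game from $Y$ at $\rho = 3$ and arriving at $\rho_{X'} = 1$, the link must realise a net drop of $2$ in Picard rank, with no intermediate fibration, and all of this drop can occur only through divisorial contractions. Hence precisely two divisorial contractions occur, and they occur at the termination since the intervening game is an isomorphism in codimension one.

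The remaining task is to locate these two contractions along the game, which is exactly where the simultaneous and the consecutive cases (Theorems \ref{sim div contr} and \ref{cons div contr}) come from. Because the ambient toric game has rank $2$ while $\rho_Y = 3$, the ``extra'' divisor inherited from the $\Proj^2 \times \Proj^2$ structure of $X$ need not be visible to the toric wall-crossings. I would therefore examine the terminal toric contraction $\mathcal{T}_{\mathrm{end}} \to \mathcal{Z}$ and its restriction to the last $3$-fold in the game. Either the restricted exceptional locus is reducible and both divisors are contracted at once, giving the simultaneous case; or the restriction is a single divisorial contraction $Y_{\mathrm{end}} \to Y'$ taking $\rho$ from $3$ to $2$, after which the remaining extremal ray of the $\rho = 2$ Fano $Y'$ supplies a second divisorial contraction $Y' \to X'$, giving the consecutive case. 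In each case one verifies, from the weights recorded in the second Tom format, that the final steps contract divisors onto $X'$.

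The main obstacle is precisely this last verification: a toric divisorial contraction of $\mathcal{T}$ can restrict to a small contraction, a flip, a fibration, or a divisorial contraction on $Y$, so one must show that the terminal steps are genuinely divisorial on the $3$-fold, exclude the degenerate alternatives, and correctly separate the reducible (simultaneous) from the irreducible (consecutive) behaviour. I expect this to require a direct examination of the restricted exceptional loci and of the discrepancies dictated by the second Tom weight matrix, using the factoriality and the terminal compound Du Val nature of the endpoints to rule out small contractions and fibrations. Carrying out this combinatorial check family by family for the five hypersurfaces is what the two theorems being summarised accomplish.
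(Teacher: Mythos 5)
There is a genuine gap, and it comes in two parts. First, your Picard-rank bookkeeping runs the paper's logic backwards: you take $\rho_{X'}=1$ as an input, justified by factoriality of the endpoint (Theorem \ref{factorial endpoints}), whereas in the paper $\rho_{X'}=1$ is a \emph{consequence} of Theorems \ref{cons div contr} and \ref{sim div contr} together with $\rho_X=2$ (this is stated explicitly right after Theorem \ref{sim div contr}), and the proof of factoriality itself uses the explicit equation of $X'$ extracted from the link analysis (the global eliminations of $t, y_1, s$ and the evaluation $y_4=1$ carried out in the proofs of Theorems \ref{constant cDV} and \ref{endpoints not isomorphic}). These endpoints are very singular, non-quasi-smooth hypersurfaces, so you cannot invoke a Lefschetz-type statement to get $\rho_{X'}=1$ independently of the link; factoriality of such hypersurfaces is exactly the delicate point the paper discusses, and it is not available to you before the link structure is established. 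As it stands your argument is circular within the paper's logical architecture.

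Second, even granting the rank count, your proposal defers precisely the step that constitutes the paper's proof. You correctly note that a toric wall-crossing can restrict to something of a different nature on the $3$-fold, and that one must ``examine the restricted exceptional loci'' — but that examination \emph{is} the proof, and it is not a discrepancy computation or an abstract extremal-ray argument as you suggest. In the $d_1>d_2>d_3>d_4$ case the penultimate toric map $\Psi_3$ is a toric \emph{flip}, and the paper shows its restriction to $Y_3$ is a divisorial contraction by using the Tom format of $M$: the pure monomial $y_2y_3$ in $\Pf_2(M)$ (resp.\ $\Pf_5(M)$), the unprojection equations, and the entries forced to contain $y_2$ and $x_3$ locally eliminate $y_3,s,t,x_3$ near $P_{y_2}$, so the contracted locus is the plane $\Proj^2(a,b,d_1-d_2)$ mapped to the point $P_{y_2}$; your alternative description (``the remaining extremal ray of the $\rho=2$ Fano $Y'$ supplies a second divisorial contraction'') does not reflect this mechanism. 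In the $d_1>d_2=d_3>d_4$ case the terminal toric map is a divisorial contraction \emph{to a curve} $\Proj^1_{y_2:y_3}$, and the key point is that $\Pf_2(M)=\Pf_4(M)=y_2y_3-y_1y_4$ does not vanish at the generic point of that curve, so the restriction splits into two simultaneous contractions to the points $P_{y_2}$ and $P_{y_3}$. Neither of these two format-specific computations appears in your proposal; acknowledging that ``carrying out this combinatorial check \ldots is what the two theorems being summarised accomplish'' concedes that the essential content is missing rather than supplying it.
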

The fact that $\rho_X=2$ is reflected on the birational link of $(X,p)$; in other words, the birational link initiated by the Kawamata blow-up of a Type I centre $p$ on $X$ a codimension 4 Fano 3-fold having Picard rank $\rho_X=2$ detects $\rho_X$ by presenting two divisorial contractions. 
Hence, a birational link constructed in this way and run on a Fano 3-fold of unknow Picard rank $\rho$ can give a lower bound to $\rho$ itself. 
This is a remark that is used in \cite[Section 6]{CampoSarkisov} to deduce the Picard rank of other deformation families of codimension 4 Fano 3-folds.

In Section \ref{Setting P2xP2} we set the notation and we introduce the $\Proj^2 \times \Proj^2$ formats of \cite{BrownKasprzykQureshiP2xP2}. We then study the birational links starting from a codimension 4 Fano 3-fold of $\Proj^2 \times \Proj^2$-type in Section \ref{birational links}. 
Finally, in Section \ref{Very sing hypersurfaces} we analyse the hypersurfaces that are the endpoints of the birational links, and we discuss their generality under the unprojection assumptions. We conclude with a complete explicit example in Section \ref{Example}. 
Below is the table containing the detailed results for each family we considered.

\begin{adjustwidth}{-30em}{-4em}
	\begin{center} 
		\begin{longtable}{| c | c | c || c c | c |}
			\caption{Hypersurface endpoints} \label{Endpoints table}
			
			\centering
			\endfirsthead
			\endhead
			\hline
			ID of $X$ & Format & Link & ID of $X'$ & $X' = X_d \subset w\Proj^4$ & Singularities \\
			\hline \hline	
			\multirow{2}{2em}{\#4839} & T$_2$ $\bullet_{13,45}$ & $>>>$ & \#10980 & $X_8 \subset \Proj(1_{x_1},1_{x_2},1_{y_3},2_{y_2},4_{x_3})$ & $cA_5 \;@\; P_{y_3}$ \\ \cline{2-6}
			& T$_5$ $\bullet_{14}$ & $>>>$ & \#10980 & $X_8 \subset \Proj(1_{x_1},1_{x_2},1_{s},2_{y_4},4_{t})$ & $cA_6 \;@\; P_{s}$ \\
			\hline	\hline 	
			\multirow{2}{2em}{\#4915} & T$_2$ $\bullet_{13,45}$ & $>>>$ & \#10981 & $X_7 \subset \Proj(1_{x_1},1_{x_2},1_{y_3},2_{y_2},3_{x_3})$ & $cA_4 \;@\; P_{y_3}$ \\ \cline{2-6}
			& T$_5$ $\bullet_{14}$ & $>>>$ & \#10981 & $X_7 \subset \Proj(1_{x_1},1_{x_2},1_{s},2_{y_4},4_{t})$ & $cA_6 \;@\; P_{s}$ \\
			\hline \hline 
			\multirow{6}{2em}{\#5002} &  &  &  &  & $cA_5 \;@\; P_{y_2}$ \\ 
			& T$_2$ $\bullet_{13,45}$ & $>=>$ & \#16202 & $X_6 \subset \Proj(1_{x_1},1_{x_2},1_{y_2},1_{y_3},3_{x_3})$ & $cA_3 \;@\; P_{y_3}$ \\
			&&&&& $cA_5 \;@\; P \coloneqq \left( 0,0,0,-\frac{1}{2},1 \right)$ \\ \cline{2-6}
			&  &  &  &  & $cA_4 \;@\; P_{s}$ \\
			& T$_4$ $\bullet_{13}$ & $>=>$ & \#16202 & $X_6 \subset \Proj(1_{x_1},1_{x_2},1_{s},1_{y_4},3_{t})$ & $cA_5 \;@\; P_{y_4}$ \\
			&&&&& $cA_4 \;@\; P \coloneqq \left( 0,0,0,2,1 \right)$ \\
			\hline \hline 
			\multirow{2}{2em}{\#5163} & T$_2$ $\bullet_{13,45}$ & $>>>$ & \#11001 & $X_6 \subset \Proj(1_{x_1},1_{x_2},1_{y_3},2_{y_2},2_{x_3})$ & $cA_3 \;@\; P_{y_3}$ \\ \cline{2-6}
			& T$_5$ $\bullet_{14}$ & $>>>$ & \#11001 & $X_6 \subset \Proj(1_{x_1},1_{x_2},1_{s},2_{y_4},2_{t})$ & $cA_4 \;@\; P_{s}$ \\
			\hline	 \hline 
			\multirow{6}{2em}{\#5306} &  &  &  &  & $cA_3 \;@\; P_{y_2}$ \\ 
			& T$_2$ $\bullet_{13,45}$ & $>=>$ & \#16203 & $X_5 \subset \Proj(1_{x_1},1_{x_2},1_{y_2},1_{y_3},2_{x_3})$ & $cA_2 \;@\; P_{y_3}$ \\
			&&&&& $cA_3 \;@\; P \coloneqq \left( 0,0,0,1,-1 \right)$ \\ \cline{2-6}
			&  &  &  &  & $cA_4 \;@\; P_{s}$ \\
			& T$_4$ $\bullet_{13}$ & $>=>$ & \#16203 & $X_5 \subset \Proj(1_{x_1},1_{x_2},1_{s},1_{y_4},2_{t})$ & $cA_3 \;@\; P_{y_4}$ \\
			&&&&& $cA_3 \;@\; P \coloneqq \left( 0,0,0,1,1 \right)$ \\
			\hline \hline 
			\multirow{2}{2em}{\#10985} & T$_2$ $\bullet_{13,45}$ & $>>>$ & \#16203 & $X_5 \subset \Proj(1_{x_1},1_{x_2},1_{y_3},1_{x_3},2_{y_2})$ & $cA_2 \;@\; P_{y_3}$ \\ \cline{2-6}
			& T$_5$ $\bullet_{14}$ & $>>>$ & \#16203 & $X_5 \subset \Proj(1_{x_1},1_{x_2},1_{s},1_{y_4},2_{t})$ & $cA_3 \;@\; P_{s}$ \\
			\hline	\hline 	
			\multirow{6}{2em}{\#11125} &  &  &  &  & $cA_3 \;@\; P_{y_2}$ \\ 
			& T$_2$ $\bullet_{13,45}$ & $>=>$ & \#20521 & $X_4 \subset \Proj(1_{x_1},1_{x_2},1_{x_3},1_{y_2},1_{y_3})$ & $cA_2 \;@\; P_{y_3}$ \\
			&&&&& $cA_3 \;@\; P \coloneqq \left( 0,0,0,1,-1 \right)$ \\ \cline{2-6}
			&  &  &  &  & $cA_3 \;@\; P_{s}$ \\
			& T$_4$ $\bullet_{13}$ & $>=>$ & \#20521 & $X_4 \subset \Proj(1_{x_1},1_{x_2},1_{s},1_{y_4},1_{t})$ & $cA_2 \;@\; P_{y_4}$ \\
			&&&&& $cA_2 \;@\; P \coloneqq \left( 0,0,0,1,1 \right)$ \\
			\hline
		\end{longtable}
	\end{center}
\end{adjustwidth}
\paragraph{\textbf{Notation of Table \ref{Endpoints table}}}
The first and fourth columns, e.g.~ numbers preceded by \#, show the Graded Ring Database IDs of $X$ and the links' endpoints respectively. The second column records the format of $X$, e.g.~ T$_2$ $\bullet_{13,45}$ means that $X$ is in second Tom$_2$ format and the entries $m_{13}, m_{45} = 0$ (see Definition \ref{Tom definition} below). 
The third column contains information about the shape of the link, that is, the weights $d_i$ of $w\Proj^7$ are either $d_1 > d_2 > d_3 > d_4$ for $>>>$ or $d_1 > d_2 = d_3 > d_4$ for $>=>$. 
The fifth column shows the Fano hypersurface endpoint obtained via a link from $X$ in the given Tom format. 
Lastly, the sixth column reports the type of compound Du Val singularities of the endpoint, and at which point they are, e.g.~ the coordinate point $P_{y_3} \in X_8 \subset \Proj^4(1^3,2,4)$ is a $cA_5$ singularity.

\section{Fano varieties in $\Proj^2 \times \Proj^2$ Tom format} \label{Setting P2xP2}

\subsection{Setting and notation} \label{Setting and notation}

Codimension 4 Fano 3-folds obtained via Type I unprojections present between two and four formats (at most two Tom and Jerry formats respectively), identifying just as many distinguished deformation families (see \cite[Section 3.4]{T&Jpart1} and \cite{PapadakisReidKM,KustinMiller}). 
To build a codimension 4 Fano 3-fold $X$ we need the following data.
\begin{itemize}
	\item A fixed projective plane $D \coloneqq \Proj^2(a_{x_1},b_{x_2},c_{x_3}) \subset \Proj^6(a_{x_1},b_{x_2},c_{x_3}, d_{1_{y_1}}, d_{2_{y_2}}, d_{3_{y_3}} d_{4_{y_4}})$ with $d_1 \geq d_2 \geq d_3 \geq d_4$. So $D$ is defined by the ideal $I_D \coloneqq \Span[y_1,y_2,y_3,y_4]$.
	\item A family $\mathcal{Z}_1$ of codimension 3 Fano 3-folds $Z \subset w\Proj^6$, each defined by maximal pfaffians of a graded skew-symmetric $5 \times 5$ syzygy matrix $M$ with weighted entries 
	\begin{equation*}
	\left(
	\begin{array}{c c c c}
	m_{1,2} & m_{1,3} & m_{1,4} & m_{1,5} \\
	& m_{2,3} & m_{2,4} & m_{2,5} \\
	& & m_{3,4} & m_{3,5} \\
	& & & m_{4,5}
	\end{array}
	\right) 
	\end{equation*}
	where we omit the principal diagonal, and write only the upper-right triangle.
\end{itemize}
Each entry of $M$ is occupied by a polynomial in the degree dictated by the grading (all possible gradings are listed in \cite{TJBigTable}). 
The plane $D$ is a divisor inside $Z_1 \in \mathcal{Z}_1$ if the equations of $Z_1$ are the maximal pfaffians of $M$ in either Tom or Jerry format, and $Z_1$ is nodal on $D$. 
In this paper we will only focus on the deformation families arising as Tom formats.
\begin{definition}[\cite{T&Jpart1}, Definition 2.2]\label{Tom definition}
	A $5 \times 5$ skew-symmetric matrix $M$ is in Tom$_k$ format if and only if each entry $a_{i,j}$ for $i,j \neq k$ is in the ideal $I_D$.
\end{definition}
If $M$ is in Tom format, we say that
\begin{definition} \label{Tom type def}
	A codimension 4 index 1 Fano 3-fold $X$ is \textit{of Tom Type} if it is obtained as Type I unprojection of the codimension 3 pair $Z \coloneqq \left( \Pf_i(M) \right)_{i=1}^5 \supset D$ in a Tom family (cf \cite{T&Jpart1,PapadakisReidKM}). 
	It is said to be \textit{general} if $Z \supset D$ is general in its Tom family. 
	The image of $D \subset Z$ in $X$ is a cyclic quotient singularity $X \ni p \sim \frac{1}{r}(a,b,c)$ called \textit{Type I centre}. 
\end{definition}
Type I unprojections introduce a new coordinate $s$ of weight $r$, called \textit{unprojection variable}, and $X$ is now embedded into a 7-dimensional weighted projective space $\Proj^7(a,b,c, d_1, \dots, d_4,r)$. 
For $\Pf_i(M),\,g_j \in \mathbb{C}[x_1,x_2,x_3,y_1,\ldots,y_4]$, the equations of $X$ are of the form
\begin{equation*}
\left( \Pf_i(M) = sy_j-g_j=0, \,\, 1\leq i \leq 5,\, 1\leq j\leq 4 \right)
\end{equation*}
When there are two possible Tom formats for $M$, one of the two (the one with smaller Euler characteristic, which we will often call \textit{second Tom format} for simplicity) is such that some of the Tom constraints cannot be satisfied with the coordinates of $w\Proj^7$, e.g.~ a certain entry must be in $I_D$, but its degree is unattainable. 
As a consequence, some entries can only be filled with the zero polynomial. The bullet notation in Table \ref{Endpoints table} records the zero entries in $M$, e.g.~ for $M$ in T$_5$ $\bullet_{14}$ format, $m_{14}=0$. 

If $X$ is of second Tom type, it has Picard rank $\rho_X=2$ (see \cite[Proposition 2.1]{BrownKasprzykQureshiP2xP2}), and its equations can be written in the $\Proj^2 \times \Proj^2$ format of \cite{BrownKasprzykQureshiP2xP2}, that is they are the $2 \times 2$ minors of a $3 \times 3$ graded matrix 
\begin{equation*}
N \coloneqq \left(
\begin{array}{c c c}
n_{11} & n_{12} & n_{13} \\
n_{21} & n_{22} & n_{23} \\
n_{31} & n_{32} & n_{33}
\end{array}
\right) \; .
\end{equation*}
Building the matrix $M$ from $N$ consists in performing a Gorenstein projection with respect to $p \in X$ (see \cite[Subsection 3.2]{BrownKasprzykQureshiP2xP2} and the example \ref{Example} below).

\section{Birational links} \label{birational links}

We consider those Fano 3-folds $X$ in second Tom format that have at least two Type I centres whose blow up initiates a birational link terminating in a Fano hypersurface; we will then compare the Fano endpoints. 
Thus, this restricts the landscape to those Fano 3-folds $X$ sitting inside weighted $w\Proj^7$ having either $d_1 > d_2 > d_3 > d_4$ or $d_1 > d_2 = d_3 > d_4$ (see \cite[Lemma 4.8]{CampoSarkisov}). 
Each single birational link that we look at will therefore fall in one of these two cases: when $d_1 > d_2 > d_3 > d_4$ we have 
\begin{equation} \label{Birational link consecutive div}
\xymatrix@C=15pt{
	& Y_1 \ar[dl]_\Phi \ar[dr]^{\alpha_1} \ar@{-->}[rr]^{\Psi_1} & & Y_2 \ar[dl]_{\beta_1} \ar[dr]^{\alpha_2} \ar@{-->}[rr]^{\Psi_2} & & Y_3 \ar[dl]_{\beta_2} \ar[dr]^{ \Phi_1' \coloneqq \Psi_3 = \alpha_3} & &  \\
	w\Proj \supset X & & Z = Z_1 & & Z_2 & & Z_3 = Y_4 \ar[dr]^{\Phi_2'} & \\
	&&&&&&& X' \subset w\Proj'
}
\end{equation} 
and when $d_1 > d_2 = d_3 > d_4$ we have
\begin{equation} \label{Birational link simultaneous div}
\xymatrix@C=15pt{
	& Y_1 \ar[dl]_\Phi \ar[dr]^{\alpha_1} \ar@{-->}[rr]^{\Psi_1} & & Y_2 \ar[dl]_{\beta_1} \ar[dr]^{\alpha_2} \ar@{-->}[rr]^{\Psi_2} & & Y_3 \ar[dl]_{\beta_2} \ar[dr]^{\Phi'} &  \\
	w\Proj \supset X & & Z = Z_1 & & Z_2 & & X' \subset w\Proj'
}
\end{equation} 
where $\Phi'$ consists of two simultaneous divisorial contractions (see Theorem \ref{sim div contr}). 
Call $\F_i$ the rank two toric variety that is the ambient space of $Y_i$. 

We now list some theorems regarding the behaviour of the birational links starting from $(X,p)$. These ultimately justify the diagrams \eqref{Birational link consecutive div} and \eqref{Birational link simultaneous div}. 
Some of these results are proven in \cite{CampoSarkisov}, and they hold for any Tom format. 
Each link is initiated by a Kawamata blow-up of $X$ at its Type I centre $p$, and continues performing a variation of GIT quotient of the toric ambient space of $Y_1$: this is described explicitly in \cite[Section 3]{CampoSarkisov}. 
\begin{theorem}[Theorem 4.3 of \cite{CampoSarkisov}]
	Let $n$ be the number of nodes on $D \subset Z_1$. Then, the birational map $\Psi_1 \colon Y_1 \dashrightarrow Y_2$ is a flop of $n$ smooth rational curves.
\end{theorem}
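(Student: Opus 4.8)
The plan is to identify $\Psi_1$ with the simultaneous Atiyah flop of the $n$ rational curves lying over the $n$ nodes of $Z_1$, by exhibiting $\alpha_1\colon Y_1\to Z_1$ and $\beta_1\colon Y_2\to Z_1$ as the two small resolutions of these nodes. Since $Y_1$ is the Kawamata blow-up of the terminal Fano $X$, it is $\Q$-factorial and terminal, and the Gorenstein (Type I) projection $X\dashrightarrow Z_1$ centred at $p$ becomes the morphism $\alpha_1$ after this blow-up. First I would pin down the local structure of $Z_1$ along $D$: by hypothesis $Z_1$ is nodal on $D$, so at each of its $n$ nodes it has an ordinary double point. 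Restricting the maximal Pfaffians of $M$ to a neighbourhood of such a point and using that the Tom$_k$ format forces the off-$k$ entries into $I_D$, one checks that the singularity is analytically $\{xy-zw=0\}\subset\C^4$, i.e.\ a $cA_1$ point, and that $Z_1$ is smooth along $D$ away from these $n$ points.

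Next I would prove that $\alpha_1$ is small, with exceptional locus equal to $n$ curves over the nodes. The exceptional divisor $E_1$ of $\Phi\colon Y_1\to X$ is mapped by $\alpha_1$ onto the $2$-dimensional plane $D$, hence it is generically finite onto its image and is not contracted, while every other prime divisor of $Y_1$ maps birationally onto a divisor of $Z_1$; thus $\alpha_1$ contracts no divisor and is an isomorphism over $Z_1$ away from the $n$ nodes. Here the toric $2$-ray game on the rank two ambient $\F_1$ enters: $\alpha_1$ is the contraction of one extremal ray of $\F_1$, and a dimension count for the intersection of $Y_1$ with the corresponding toric stratum shows its exceptional locus is one-dimensional rather than a divisor. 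Matching this locus with $Z_1\cap\{\text{stratum}\}$ produces exactly one smooth rational curve $C_i\cong\Proj^1$ over each of the $n$ nodes.

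Since each node is a $cA_1$ point and $\alpha_1$ is a small resolution of it, $\alpha_1$ is crepant near $C_i$, so by the projection formula $K_{Y_1}\cdot C_i=\alpha_1^{*}K_{Z_1}\cdot C_i=0$ and the $C_i$ are flopping, not flipping, curves over $Z_1$. The symmetric argument shows that $\beta_1\colon Y_2\to Z_1$ is the opposite small resolution of the same $n$ nodes, replacing each $C_i$ by the flopped curve $C_i'\cong\Proj^1$. Hence $\Psi_1=\beta_1^{-1}\circ\alpha_1$ is an isomorphism in codimension $1$ over $Z_1$, is $K$-trivial on the contracted curves, and exchanges the two small resolutions of each ordinary double point; this is exactly the statement that $\Psi_1$ is the flop of the $n$ smooth rational curves $C_1,\dots,C_n$.

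The main obstacle is the smallness-and-count step. Establishing smallness requires controlling the intersection of $Y_1$ with the wall-crossing locus of $\F_1\dashrightarrow\F_2$, i.e.\ showing that the indeterminacy of $\Psi_1$ on $Y_1$ has codimension $\ge 2$ and sweeps out no divisor; and fixing the number at exactly $n$ relies on the precise bijection, governed by the Tom constraints on $M$, between the nodes of $Z_1\supset D$ and the one-dimensional fibres of $\alpha_1$. Once these are secured the crepancy follows formally from the ordinary double point local model, and the flop statement is immediate.
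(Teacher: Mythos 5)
This theorem is not proved in the paper at all: it is imported verbatim from \cite{CampoSarkisov} (Theorem 4.3 there), so there is no in-paper argument to measure your attempt against; the comparison has to be with the standard proof in the cited literature. Your proposal reconstructs exactly that standard argument, and it is sound in outline: by the geometry of Type I (Kustin--Miller) unprojection, the Kawamata blow-up $\Phi \colon Y_1 \to X$ of the centre $p$ resolves the projection $X \dashrightarrow Z_1$, and the induced morphism $\alpha_1 \colon Y_1 \to Z_1$ is small -- it maps $E_1$ finitely onto $D$, is an isomorphism where $I_D$ is locally principal, and extracts one $(-1,-1)$-curve over each of the $n$ ordinary double points of $Z_1$ lying on $D$; crepancy of small morphisms over the ODP local model $\{xy-zw=0\}$ gives $K_{Y_1}\cdot C_i=0$, and $\beta_1$ is the opposite small resolution, so $\Psi_1$ is the simultaneous Atiyah flop of $n$ copies of $\Proj^1$. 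This is the same mechanism as in \cite{CampoSarkisov}, where the conclusion is obtained by restricting the toric variation-of-GIT wall crossing of $\F_1$ to $Y_1$ and identifying the contracted locus with the curves above the nodes. The caveat is the one you flag yourself: the smallness of $\alpha_1$ and the exact bijection between nodes and one-dimensional fibres are announced as obstacles rather than verified, and those two verifications (the dimension count of $Y_1$ against the toric contracted stratum, plus the local analysis of the Pfaffians in Tom format at a node) are precisely the substance of the cited proof; so your text is a correct and well-aimed plan, but not yet a self-contained proof.
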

\begin{theorem}[Theorem 4.5 of \cite{CampoSarkisov}] \label{hypersurf flips}
	Since $d_1 > d_2$, the birational map $\Psi_2 \colon Y_2 \dashrightarrow Y_3$ is a hypersurface flip.
\end{theorem}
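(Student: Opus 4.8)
The plan is to read off everything from the variation of GIT on the rank-two toric ambients, exactly as set up in \cite[Section 3]{CampoSarkisov}, and then restrict the resulting toric wall-crossing to $Y$. First I would fix the Cox ring of the rank-two toric variety $\F_1$ produced by the Kawamata blow-up $\Phi$ of $(X,p)$: its generators carry two gradings, recorded by a weight matrix with two rows whose columns are the weight vectors $(a_i,b_i)$ of the coordinates of $w\Proj^7$ together with the exceptional coordinate of $E$. The secondary fan (the subdivision of the movable cone into GIT chambers) is a fan in $\mathbb{R}^2$, and each of its walls corresponds to one step of the $2$-ray game. Since $\Psi_1$ has already been identified as the flop across the first wall (Theorem 4.3 of \cite{CampoSarkisov}), the map $\Psi_2\colon Y_2\dashrightarrow Y_3$ is the restriction to $Y_2$ of the toric modification $\F_2\dashrightarrow\F_3$ across the next wall $\omega$.

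Second, I would classify the wall $\omega$ in the ambient. Fixing the primitive direction of $\omega$ and projecting the weight vectors to the orthogonal line produces a one-parameter subgroup whose weights partition the Cox generators into three sets: $A$ (positive), $B$ (negative), and $C$ (lying on $\omega$, i.e.\ the fixed stratum). The toric map across $\omega$ contracts the weighted-projective fibration $\Proj(A)$ over the $C$-stratum and replaces it by $\Proj(B)$; it is small precisely when $|A|,|B|\ge 2$, and a flip (rather than a flop) precisely when the two sides carry different total weight, so that the discrepancy is nonzero. Here the strict inequality $d_1>d_2$ is exactly what is required: it forces the weight vectors attached to the coordinates of weights $d_1$ and $d_2$ to span distinct rays, so that $\omega$ is crossed on its own rather than simultaneously (contrast the coincident-weight phenomenon in diagram \eqref{Birational link simultaneous div}), and it produces the weight imbalance across $\omega$ that makes the ambient modification a genuine flip with the orientation dictated by the $2$-ray game.

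Third, I would restrict to $Y$. Writing the defining equations of $Y_2$ (the unprojection equations $sy_j-g_j$ together with the pfaffians, equivalently the $2\times 2$ minors of $N$) in the affine charts adapted to $\omega$, I would show that $Y_2$ meets the toric flipping locus and that, after discarding the equations that vanish identically along it, exactly one equation survives as a nontrivial constraint on $\Proj(A)$ (respectively on $\Proj(B)$). Thus near the indeterminacy of $\Psi_2$ the variety $Y$ is cut out by a single hypersurface inside the local model of the toric flip, and the toric flip restricts to a small birational modification of $Y$ whose local analytic model is the claimed hypersurface flip; checking that this restriction is an isomorphism in codimension one on $Y$ and computing the sign of the discrepancy on $Y$ then confirms that $\Psi_2$ is a flip, and not a flop, an isomorphism, or a divisorial contraction.

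The main obstacle is precisely this last restriction step: one must control the defining equations of $Y$ along the toric flipping locus well enough to guarantee that the toric flip genuinely induces a flip on $Y$ — that $Y$ is neither contained in nor disjoint from the unstable loci, that no defining equation degenerates the expected codimension, and that exactly one equation cuts out the hypersurface model. This is a transversality and quasi-smoothness analysis in the charts around the $C$-stratum, and it is where the genericity of $Z\supset D$ in its Tom family (Definition \ref{Tom type def}) and the explicit degrees forced by $d_1>d_2$ enter, ruling out the degenerate alternatives.
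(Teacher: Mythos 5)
Your overall architecture --- classify the wall crossing on the rank-two ambient $\F_2 \dashrightarrow \F_3$, then restrict to $Y_2$ --- is the right framework, but it misallocates where the work lies. The paper does not reprove that $\Psi_2$ is a flip at all: that is cited verbatim from \cite[Theorem 4.5]{CampoSarkisov}, so your first two steps (secondary fan, wall classification, the role of $d_1>d_2$ in making the ambient modification a genuine flip) reproduce material the paper takes as known. The entire content of the proof of Theorem \ref{hypersurf flips} in this paper is your third step --- showing that the restriction of the toric flip to $Y_2$ is a \emph{hypersurface} flip --- and that is precisely the step you do not carry out: you assert that ``exactly one equation survives as a nontrivial constraint'' and then name this restriction step as the main obstacle, to be handled by a transversality and quasi-smoothness analysis. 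Declaring the obstacle is not overcoming it, so the proposal has a genuine gap exactly where the theorem needs an argument.

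Moreover, the mechanism you sketch for that step (``discarding the equations that vanish identically along'' the flipping locus) is not how the reduction works. The paper's proof is an explicit, format-by-format variable elimination forced by the Tom and grading constraints: $y_1$ necessarily occupies the entries $m_{35}$, $m_{34}$, $m_{25}$ of $M$ in the formats Tom$_2$, Tom$_5$, Tom$_4$ respectively, so the pfaffians involving those entries acquire terms such as $t y_1$ in the $\Phi$-pull-back; hence, in a neighbourhood of $P_{y_1}\in Z_2$ where $y_1$ is a unit, one solves those equations for $t$ (Tom$_4$, Tom$_5$) or for $x_3$ (Tom$_2$, via $m_{12}=x_3$ in \eqref{>>>T2}), and likewise for $y_4$ (via $m_{14}$, resp.\ $m_{12}$), while the unprojection variable $s$ is eliminated globally by the equations $s y_j = g_j$. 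It is this bookkeeping of which coordinates sit linearly in which matrix entries --- a concrete normalisation available for the general member of the Tom family in the sense of Definition \ref{Tom type def}, not an abstract genericity or transversality argument --- that reduces $Y_2$ near the flipped locus to the weighted planes $\Proj^2(d_1,a,b)$ and $\Proj^2(a,b,c)$ cut by the single remaining essential equation, i.e.\ exhibits the local model of $\Psi_2$ as a hypersurface inside a toric flip. Without identifying these entries, nothing in your plan rules out the restriction of the toric flip to $Y_2$ being, say, a codimension-two complete intersection in the local model rather than a hypersurface, so the conclusion would not follow.
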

\begin{proof}
	Refer to \cite[Theorem 4.5]{CampoSarkisov} for the proof of the fact that $\Psi_2$ is a flip. Here we show that, in the considered cases, $\Psi_2$ is in particular a hypersurface flip. 
	Note that the coordinate $y_1$ always occupies the entry $m_{35}, m_{34}, m_{25} \in I_D$ in the formats Tom$_2$, Tom$_5$, Tom$_4$ respectively, and also entry $m_{23} \not\in I_D$ in format Tom$_5$, and entries $m_{35} \in I_D$, $m_{24}, m_{34} \not\in I_D$ in format Tom$_4$. In the entries having degree $d_1$ occupied by $y_1$, we have $t y_1$ in the $\Phi$-pull-back. 
	For this reason, $t$ is locally eliminated in a neighbourhood of $P_{y_1} \in Z_2$ when $M$ is either in Tom$_4$ or Tom$_5$ format. 
	Instead, since $x_3$ occupies the entry $m_{12}$ in Tom$_2$ format (see also \eqref{>>>T2} below), $x_3$ is locally eliminated when $M$ is in Tom$_2$ format. 
	Observe that $y_4$ occupies entry $m_{14}$ in Tom$_2$, $m_{12}$ in Tom$_5$, and $m_{12}$ in Tom$_4$; hence, $y_4$ is locally eliminated too.
	Lastly, the unprojection variable $s$ is globally eliminated because the unprojection equations are of the form $s y_i = g_i (t,x_1,x_2,x_3,y_1,y_2,y_3,y_4)$. 
	
	This shows that the locus flipped by $\Psi_2$ is: the weighted plane $\Proj^2(d_1,a,b)$ cut by the remaining equations of $Y_2$ (the ones not used to eliminate variables); the weighted plane $\Proj^2(a,b,c)$ cut by the remaining equations of $Y_2$. 
	Thus, these are hypersurface flips.
\end{proof}

\begin{theorem} \label{cons div contr}
	Suppose that $d_1 > d_2 > d_3 > d_4$ and $X$ is in second Tom format. Then, both $\Phi_1' \coloneqq \Psi_3 \colon Y_3 \dashrightarrow Y_4$ and $\Phi_2' \colon Y_4 \dashrightarrow X'$ are divisorial contractions to a point.
\end{theorem}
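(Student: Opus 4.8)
The plan is to analyze the two maps $\Phi_1' = \Psi_3 \colon Y_3 \dashrightarrow Y_4$ and $\Phi_2' \colon Y_4 \dashrightarrow X'$ separately by tracking the variation of GIT on the rank-two toric ambient spaces $\F_3, \F_4$ and restricting to the Fano 3-folds $Y_3, Y_4, X'$ cut out inside them. The key structural input is that we are in the case $d_1 > d_2 > d_3 > d_4$, so the wall-crossings on the toric variety happen one at a time (in contrast to the simultaneous contraction of Theorem \ref{sim div contr}), which is why the link terminates with two consecutive divisorial contractions rather than one double contraction.

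First I would make precise the ambient picture: the toric varieties $\F_i$ are rank-two (2-ray) toric varieties whose nef cones decompose into chambers separated by walls, and each map $\Psi_i$ or $\Phi_i'$ corresponds to crossing one wall. After the flip $\Psi_2$ of Theorem \ref{hypersurf flips}, the remaining two walls of $\F_3$ correspond to the two extremal contractions $\Phi_1'$ and $\Phi_2'$. For each wall I would compute the weights of the GIT quotient on the relevant side, identify which toric stratum is contracted, and check that the exceptional locus of the induced map on the 3-fold is a divisor collapsing to a point. Concretely, for $\Phi_1'$ I would exhibit the exceptional divisor as the vanishing of the lowest-weight coordinate (the one becoming irrelevant at the wall), show it meets $Y_3$ in a divisor, and verify that its image in $Z_3 = Y_4$ is a single point by checking that the remaining coordinates on that divisor are forced to specified values. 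The analogous analysis applies to $\Phi_2' \colon Y_4 \dashrightarrow X'$, landing in the weighted $\Proj^4$ hypersurface.

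The main steps, in order, are: (i) record the weight matrix of $\F_3$ (and then $\F_4$) and locate its two remaining walls using the ordering $d_1 > d_2 > d_3 > d_4$; (ii) for each wall, determine the GIT-unstable toric stratum, which gives the candidate exceptional divisor; (iii) restrict to the 3-fold and confirm the exceptional locus is genuinely a divisor (using the equations of $X$ coming from the unprojection, as in the proof of Theorem \ref{hypersurf flips}, where variables are locally eliminated); (iv) verify the contracted image is a point by showing the fibre over a general point of the target is positive-dimensional only over that one point, equivalently that the divisor maps to a coordinate point; and (v) check that each map is a contraction in the Mori sense (the contracted divisor is covered by curves of negative intersection with the canonical class), so that it is a genuine divisorial contraction and not a flip or flop.

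The hard part will be step (iii)–(iv): certifying that the exceptional locus is exactly a divisor (not something of lower dimension, which would signal a flip) and that it contracts to a \emph{point} rather than to a curve. This requires knowing precisely which monomials survive in the equations of $Y_3$ and $Y_4$ after the eliminations, so that the dimension of the contracted stratum and of its image can be read off; the bookkeeping depends on the specific Tom format and on which coordinate sits in the degree-$d_1$ entry of the matrix. I expect to handle this by the same case distinction across the Tom$_2$, Tom$_4$, Tom$_5$ formats used in Theorem \ref{hypersurf flips}, reducing the verification to a finite check of degrees and monomials for each family in Table \ref{Endpoints table}.
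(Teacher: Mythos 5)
Your proposal follows essentially the same route as the paper's proof: a wall-by-wall vGIT analysis of the rank-two toric ambient spaces, explicit elimination of variables from the Pfaffian and unprojection equations in each second Tom format to identify the contracted locus restricted to the 3-fold as a weighted plane collapsing to a coordinate point, and then verification of the Mori-theoretic conditions (terminality, $\Q$-factoriality, relative anticanonical ampleness, irreducibility of the contracted locus, relative Picard rank 1). The only slight imprecision is that in the case $d_1 > d_2 > d_3 > d_4$ only the Tom$_2$ and Tom$_5$ formats occur (Tom$_4$ belongs to the case $d_1 > d_2 = d_3 > d_4$ of Theorem \ref{sim div contr}), but this does not affect the substance of the argument.
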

\begin{proof}
	Suppose that $d_1 > d_2 > d_3 > d_4$. In this instance and for the families we consider, the matrix $M$ is either in second Tom$_2$ format \eqref{>>>T2} or is second Tom$_5$ format \eqref{>>>T5}, having grading of the form 
	
	\noindent\begin{minipage}{.5\linewidth}
	\begin{equation} \label{>>>T2}
	\left(
	\begin{array}{c c c c}
	\textcolor{purple}{c} & c + d_2 - d_3 & d_4 & d_3 \\
	& \textcolor{purple}{d_4} & \textcolor{purple}{d_3} & \textcolor{purple}{d_2} \\
	& & d_2 & d_1 \\
	& & & d_1 + d_3 - d_4
	\end{array}
	\right) \; \text{ or}
	\end{equation}
	\end{minipage}%
	\begin{minipage}{.5\linewidth}
	\begin{equation} \label{>>>T5}
	\left(
	\begin{array}{c c c c}
	d_4 & d_3 & d_3 & \textcolor{purple}{d_2} \\
	& d_2 & d_2 & \textcolor{purple}{d_1} \\
	& & d_1 & \textcolor{purple}{c} \\
	& & & \textcolor{purple}{c}
	\end{array}
	\right) 
	\end{equation}
	\end{minipage}
	where the entries with degree written in purple are polynomials not in the ideal $I_D$. 
	Then, the second Pfaffian $\Pf_2(M)$ for $M$ as in \eqref{>>>T2}, or $\Pf_5(M)$ for $M$ as in \eqref{>>>T5}, contains the pure monomial $y_2 y_3$. Thus, the localisation at $P_{y_2} \in Z_3$ is such that the coordinate $y_3$ can be locally eliminated in a neighbourhood of $P_{y_2}$. 
	The unprojection variable $s$ is always globally eliminated thanks to the unprojection equations. 
	Moreover, the entry $m_{25}$ in \eqref{>>>T2} and the entry $m_{15}$ in \eqref{>>>T5} are polynomials in which $y_2$ appears as a pure monomial of degree 1: in the pull-back by $\Phi$ it will become $t y_2$ (see also \cite[Subsection 3.3]{CampoSarkisov} for more details). Thus, the localisation $P_{y_2} \in Z_3$ locally eliminates the variable $t$. 
	
	The $m_{12}$ entry of \eqref{>>>T2}, and the $m_{35}$ entry of \eqref{>>>T2} must feature $x_3$. Therefore, the localisation at $P_{y_2}$ also locally eliminates $x_3$ via $\Pf_5(M)$ in \eqref{>>>T2} and via $\Pf_1(M)$ in \eqref{>>>T5} (note that $m_{14}=0$).
	
	Therefore, the contracted locus of the toric flip $\Psi_3 \colon \F_3 \dashrightarrow \F_4$ restricted to $Y_3$ is the weighted plane $\Proj^2(a,b,d_1-d_2)$; this is contracted to $P_{y_2} \in Z_3$, and we call $\Phi_1'$ the restriction $\Psi_3|_{Z_3}$. 
	
	The singularities of $Y_3$ are terminal: they are either isolated cyclic quotient singularities, or isolated compound Du Val singularities. Also, $Y_3$ is $\Q$-factorial and its anticanonical divisor is $\Phi_1'$-ample. 
	The contracted locus is irreducible, and $\Phi_1'$ has relative Picard rank 1. Thus, $\Phi_1'$ is a divisorial contraction to $P_{y_2} \in Z_3$. 
	Proving that $\Phi_2'$ is a divisorial contraction is analogous. 
\end{proof}

\begin{theorem} \label{sim div contr}
	Suppose that $d_1 > d_2 = d_3 > d_4$ and $X$ is in second Tom format. Then, $\Phi' \colon Y_3 \dashrightarrow X'$ consists of two simultaneous divisorial contractions to points.
\end{theorem}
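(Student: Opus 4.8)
The plan is to follow the strategy of the proof of Theorem \ref{cons div contr}, now exploiting the symmetry forced by the equality $d_2 = d_3$. First I would record the matrix $M$ in the two second Tom formats relevant to the $>=>$ links, namely Tom$_2$ $\bullet_{13,45}$ and Tom$_4$ $\bullet_{13}$. For Tom$_2$ this is the grading of \eqref{>>>T2} with the relation $d_2 = d_3$ imposed, so that the coordinate points $P_{y_2}, P_{y_3} \in Z_3$ now carry weights of equal value; the Tom$_4$ case is treated by the analogous grading. The equality $d_2 = d_3$ is precisely the feature that merges the two separate contractions $\Phi_1'$ and $\Phi_2'$ of the consecutive case into the single step $\Phi'$ of \eqref{Birational link simultaneous div}, and it is this coincidence that I must track throughout.

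Next I would carry out the local elimination of variables at the two points in parallel. As in Theorem \ref{cons div contr}, the Pfaffian $\Pf_2(M)$ contains the pure monomial $y_2 y_3$: localising at $P_{y_2}$ eliminates $y_3$, while localising at $P_{y_3}$ eliminates $y_2$, the two eliminations being exchanged by the symmetry $y_2 \leftrightarrow y_3$. The unprojection variable $s$ is again globally eliminated from the equations $s y_i = g_i$. For the Kawamata blow-up variable $t$, the entries $m_{25}$ and $m_{24}$, which carry $t y_2$ and $t y_3$ in the $\Phi$-pull-back and have the common degree $d_2 = d_3$, eliminate $t$ at $P_{y_2}$ and at $P_{y_3}$ respectively; a further Pfaffian eliminates $x_3$ at each point exactly as before. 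I would then conclude that the two loci contracted by the final toric wall-crossing underlying $\Phi'$, restricted to $Y_3$, are the weighted planes $\Proj^2(a,b,d_1-d_2)$ and $\Proj^2(a,b,d_1-d_3)$, which coincide in weight since $d_2 = d_3$, and which are contracted to $P_{y_2}$ and $P_{y_3}$ respectively.

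With the two contracted loci identified, I would verify the contraction axioms as in Theorem \ref{cons div contr}: $Y_3$ is $\Q$-factorial with isolated terminal (cyclic quotient or compound Du Val) singularities, each contracted locus is an irreducible weighted plane mapping to a single point, and the anticanonical class is $\Phi'$-ample. The decisive new point is that, because $d_2 = d_3$, the two rays of the fan of $\F_3$ whose contraction produces $P_{y_2}$ and $P_{y_3}$ lie on a common wall of the vGIT chamber decomposition; hence a single wall-crossing contracts both divisors at once, rather than across two distinct walls as happens when $d_2 > d_3$. This is what makes the two contractions simultaneous.

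The main obstacle will be to show that $\Phi'$ is literally two divisorial contractions, that is, that the relative Picard number $\rho(Y_3/X')$ equals $2$ and that the exceptional locus has two irreducible components, each an honest divisor contracted to its own point $P_{y_2}$, respectively $P_{y_3}$, rather than a single contraction of a reducible or higher-dimensional locus. I expect this to follow from the vGIT description: the equality $d_2 = d_3$ places the two extremal rays on a common wall, so crossing that wall effects both contractions at the same value of the parameter, while the two exceptional planes meet $Y_3$ in distinct components mapping to the distinct points $P_{y_2} \neq P_{y_3}$. Consequently $\Phi'$ is an isomorphism away from their union and factors, in either order, as two divisorial contractions of relative Picard rank $1$, which is exactly the assertion of the theorem.
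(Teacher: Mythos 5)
Your proposal has a genuine gap, and it sits exactly where the actual content of the theorem lies. Your toric picture is not correct: when $d_2 = d_3$ the columns of $y_2$ and $y_3$ span a common ray, but the resulting toric map $\Phi'$ does \emph{not} contract two toric divisors to two points at a single wall. Rather, it contracts the single toric divisor $\{y_4 = 0\} \subset \F_3$ onto the \emph{curve} $\Proj^1_{y_2:y_3} \subset w\Proj'$ (both $y_2$ and $y_3$ survive as degree-one generators of the quotient at that wall, so they become coordinates of $w\Proj'$, and the divisors $\{y_2=0\}$, $\{y_3=0\}$ are not contracted at all). This is the paper's starting point: torically, $\Phi'$ is a divisorial contraction to a curve. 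A general 3-fold inside $\F_3$ would meet $\{y_4=0\}$ in a surface dominating that curve, so the vGIT structure alone can never force contractions to points; your claim that ``the common wall'' makes the two point-contractions simultaneous is therefore not a valid deduction, and your final paragraph assumes precisely what has to be proved, namely that the exceptional locus of $\Phi'|_{Y_3}$ breaks into two components each mapping to a single point.

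The missing ingredient is an argument using the equations of $Y_3$, and the paper supplies it in one line: in both relevant formats the Pfaffian $\Pf_2(M)$ (Tom$_2$), respectively $\Pf_4(M)$ (Tom$_4$), equals $y_2 y_3 - y_1 y_4$, which vanishes identically on $X'$ but is nonzero at any point $(\sigma_1,\sigma_2) \in \Proj^1_{y_2:y_3}$ with $\sigma_1\sigma_2 \neq 0$ (since $y_1 = y_4 = 0$ along that curve). Hence $X' \cap \Proj^1_{y_2:y_3} = \{P_{y_2}, P_{y_3}\}$, and equivalently $Y_3 \cap \{y_4=0\}$ splits into the two loci $\{y_4 = y_2 = 0\}$ and $\{y_4 = y_3 = 0\}$, each contracted to one of the two coordinate points; this is what converts the toric contraction-to-a-curve into two simultaneous divisorial contractions to points. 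Your local eliminations at $P_{y_2}$ and $P_{y_3}$ (mimicking Theorem \ref{cons div contr}) are fine as far as they go, but they only describe the map near those two points and cannot detect what happens over the rest of the curve, so without the Pfaffian argument the proof does not close.
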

\begin{proof}
	When  $d_1 > d_2 = d_3 > d_4$ and for the families we consider, $M$ is either in second Tom$_2$ format or in second Tom$_4$ format, whose gradings are as follows
	
	\noindent\begin{minipage}{.5\linewidth}
		\begin{equation} \label{>=>T2}
		\left(
		\begin{array}{c c c c}
		\textcolor{purple}{c} & c & d_4 & d_2 \\
		& \textcolor{purple}{d_4} & \textcolor{purple}{d_2} & \textcolor{purple}{d_1} \\
		& & d_2 & d_1 \\
		& & & d_1 + d_4 - c
		\end{array}
		\right) \; \text{ or}
		\end{equation}
	\end{minipage}%
	\begin{minipage}{.5\linewidth}
		\begin{equation} \label{>=>T4}
		\left(
		\begin{array}{c c c c}
		d_4 & d_4 & \textcolor{purple}{d_2} & d_2 \\
		& d_2 & \textcolor{purple}{d_1} & d_1 \\
		& & \textcolor{purple}{d_1} & d_1 \\
		& & & \textcolor{purple}{c}
		\end{array}
		\right) 
		\end{equation}
	\end{minipage}
	where the entries with purple degree are not in the ideal $I_D$. 
	
	Torically, $\Phi'$ is a divisorial contraction to the curve $\Proj^1_{y_2:y_3} \subset w\Proj'$. However, the generic point $p \coloneqq (\sigma_1, \sigma_2) \in \Proj^1_{y_2:y_3}$ for $\sigma_1, \sigma_2 \not= 0$ is not contained in $X'$. 
	Indeed, both $\Pf_2(M)$ and $\Pf_4(M)$ in \eqref{>=>T2}, \eqref{>=>T4} respectively are equal to $y_2 y_3 - y_1 y_4$, which does not vanish at $p$. 
	Therefore, the toric contraction restrict to two simultaneous divisorial contractions, one at $P_{y_2} \in X'$ and one at $P_{y_3} \in X'$.
\end{proof}
Note that Theorems \ref{cons div contr}, \ref{sim div contr} together with the fact that $\rho_X = 2$ imply that the Picard rank of the endpoint hypersurfaces is 1.

\subsection{Birational links through $\Proj^2 \times \Proj^2$ formats}

Suppose a codimension 4 Fano 3-fold $X$ has two different Type I centres $p,q$, and that the birational links initiated by blowing up $p,q$ end with two different hypersurfaces $X'$ and $X''$. 
For instance, if the weights of $w\Proj^7$ are $d_1 > d_2 > d_3 > d_4$ for both centres, we have diagrams like
\begin{equation} \label{two links diagram}
\xymatrix@C=15pt{
	&& W_3 \ar[dl]_{ \Theta_1'} & W_2 \ar@{-->}[l]_{\Xi_2} & W_1 \ar[dr]_{\Theta=\Bl_q} \ar@{-->}[l]_{\Xi_1} && Y_1 \ar[dl]^{\Phi=\Bl_p} \ar@{-->}[r]^{\Psi_1} & Y_2 \ar@{-->}[r]^{\Psi_2} & Y_3 \ar[dr]^{ \Phi_1'} &  \\
	& W_4 \ar[dl]_{\Theta_2'} &&& Z' \ar@{-->}[r] & X & Z \ar@{-->}[l] &&& Y_4 \ar[dr]^{\Phi_2'} & \\
	X'' &&&&&&&&&& X'
}
\end{equation}
where $Z \dashrightarrow X$ is the unprojection from $(Z,D)$ producing $X$ with Type I centre $p$, and, similarly, $Z' \dashrightarrow X$ unprojects $(Z',D')$ with Type I centre $q$. Note that $Z$ and $Z'$ are the bases for the flops of $\Psi_1$ and $\Xi_1$ respectively.

Our goal is to compare the hypersurface endpoints $X', X''$. We prove the following theorem. 
\begin{theorem} \label{endpoints not isomorphic}
	Let $X$ be a codimension 4 Fano 3-fold of second Tom type with at least two Type I centres $p,q \in X$ as in Table \ref{Endpoints table}. The endpoints $X', X''$ of the two birational links centred at $p$ and $q$ are non-isomorphic 3-dimensional Fano hypersurfaces having the same Hilbert series.
\end{theorem}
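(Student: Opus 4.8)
The plan is to establish the two assertions of Theorem \ref{endpoints not isomorphic} — equal Hilbert series and non-isomorphism — by different means: the first by a direct reading of the link data, the second by a singularity invariant.

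\emph{Equal Hilbert series.} Both $X'$ and $X''$ are Fano hypersurfaces $X_d \subset w\Proj^4$ of index $1$, so each has Hilbert series
\[ P(t) = \frac{1 - t^{d}}{\prod_{i=0}^{4}\left(1 - t^{a_i}\right)}, \]
completely determined by the degree $d$ and the multiset of weights $\{a_0,\dots,a_4\}$ of the ambient space. First I would read off these weights directly from the link: the coordinates of the endpoint $w\Proj^4$ are precisely those Cox coordinates of $\F_1$ — the coordinates of $w\Proj^7(a,b,c,d_1,\dots,d_4,r)$ together with the exceptional coordinate $t$ of the Kawamata blow-up $\Phi$ — that survive all the local and global eliminations carried out in the proofs of Theorems \ref{cons div contr} and \ref{sim div contr}. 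Comparing the gradings \eqref{>>>T2}, \eqref{>>>T5} (for $>>>$ links) and \eqref{>=>T2}, \eqref{>=>T4} (for $>=>$ links) of the two Tom formats of a fixed $X$, one checks that the surviving weights form the same multiset in both cases, as recorded in the fifth column of Table \ref{Endpoints table}. Since both endpoints have Fano index $1$, the degree is pinned down by $d = \left(\sum_{i} a_i\right) - 1$ and therefore agrees as well; hence $P_{X'} = P_{X''}$.

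\emph{Non-isomorphism.} Here the key observation is that any isomorphism $\varphi\colon X' \to X''$ is biregular, so it induces a bijection $\Sing X' \to \Sing X''$ preserving the analytic type of each singular germ. The compound Du Val points are Gorenstein (canonical index $1$), whereas the remaining terminal singularities of the endpoints are cyclic quotients of index $r > 1$; thus $\varphi$ necessarily carries cDV points to cDV points. Moreover it preserves the integer $n$ of each $cA_n$ point, because $n$ equals the type of the Du Val ($A_n$) singularity cut out on a general hyperplane section through the point and is hence an analytic invariant of the germ (in particular $cA_m \not\cong cA_n$ for $m \neq n$). Consequently the multiset of compound Du Val types is an isomorphism invariant of the endpoint, and it suffices to show that it differs between $X'$ and $X''$.

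This last comparison is immediate from Table \ref{Endpoints table}: in every family the two endpoints carry distinct multisets of $cA_n$ types — for instance $\{cA_5\}$ versus $\{cA_6\}$ for \#4839, $\{cA_3,cA_5,cA_5\}$ versus $\{cA_4,cA_4,cA_5\}$ for \#5002, and $\{cA_2,cA_3,cA_3\}$ versus $\{cA_2,cA_2,cA_3\}$ for \#11125, with the remaining rows handled identically. Since the multisets disagree, no biregular $\varphi$ exists and $X' \not\cong X''$. The Hilbert-series step is routine, and I expect the genuine obstacle to be the justification of the $cA_n$ labels themselves: for each coordinate point $P_{y_i}, P_s, \dots$ one must pass to the affine chart, substitute the surviving hypersurface equation, and reduce the resulting local analytic equation to a normal form $uv + f(z,w)$ in order to read off $n$ as the $A_n$-type of a general section. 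Care is required because Definition \ref{Tom type def} only guarantees genericity within the Tom family, so one must verify that the coefficients controlling each $n$ are nonzero for general members; the non-isomorphism conclusion is exactly as robust as these local computations.
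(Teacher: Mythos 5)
Your overall strategy coincides with the paper's: the two prongs (equal Hilbert series via equal ambient weights and degree; non-isomorphism via differing compound Du Val types) are exactly the two halves of the paper's proof. Your justification that the multiset of Gorenstein $cA_n$ germs is a biregular invariant --- Gorenstein-ness separates them from the quotient-type points, and the Du Val type of a general hyperplane section recovers $n$ --- is a welcome explicit argument that the paper only asserts ("they are not isomorphic because they have different compound Du Val singularities"), and it correctly neutralises the caveat in the Remark after Theorem \ref{constant cDV}: the possible unstudied cDV points sit at non-Gorenstein orbifold points, so they cannot be traded against the listed ones under an isomorphism. Your multiset formulation is in fact slightly more careful than the paper's single-point comparison, which matters for the $>=>$ families where each endpoint carries three cDV points.

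The genuine gap --- which you flag yourself --- is that you source the $cA_n$ labels from Table \ref{Endpoints table} instead of deriving them, and here that is circular: the singularity column of the table is an output of the paper, established partly in the proof of Theorem \ref{constant cDV} (the types on $X'$, e.g.\ $cA_{d_4-1}$ at $P_{y_3}$) and partly \emph{inside the paper's proof of Theorem \ref{endpoints not isomorphic} itself} (the types on $X''$: one localises the equation $\Pf_4(M')|_{x_3=1}$ at $P_s$ and shows it has the form $x^2+y^2+z^{n'}+\cdots$, where $z^{n'}$ is the lowest pure power of $x_1$ or $x_2$, contributed by the entry $p_2$ of $M'$, whose degree exceeds $d_4$ since $d_2 \geq d_3 > d_4$). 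This one computation is what makes the comparison uniform across all five families --- the distinguished Gorenstein point of $X''$ has strictly larger $cA$-type than that of $X'$ --- and it is the actual content of the proof; without executing it, your argument is a correct reduction of the theorem to the table, not a proof of it. A secondary, fixable issue on the Hilbert-series side: your degree formula $d = \bigl(\sum_i a_i\bigr) - 1$ presupposes that the endpoints have Fano index $1$, which is again only known after the endpoints are identified; the paper avoids this by reading the degrees of $\Pf_3(M)|_{y_4=1}$ and $\Pf_4(M')|_{x_3=1}$ directly in the new grading and checking the numerical identities $d_3-d_4 = r-c$ and $d_2-d_4 = d_4-c$ that match the two weight systems.
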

The proof of Theorem \ref{endpoints not isomorphic} is contained in Section \ref{Very sing hypersurfaces}, as it relies on the analysis of the compound Du Val singularities of $X'$ and $X''$ carried out in the proof of Theorem \ref{constant cDV} below.

The heart of the construction is to realise $(X,p)$ via Type I unprojection of $(Z,D)$ where $Z = \left( \Pf_i(M)=0 \right)_{i=1}^5$, and to then retrieve the defining matrix $M'$ of $(Z', D')$ by passing through the $\Proj^2 \times \Proj^2$ construction of $N$. 
Then, using the new pair $(Z',D')$ we perform the Type I unprojection to obtain $(X,q)$. Note that the equations of $(X,p)$ and $(X,q)$ are different, but they are two general members of the same Tom deformation family. 

\begin{example} \label{4839 P2xP2}
For instance, looking at Table \ref{Endpoints table} and in comparison with \cite{TJBigTable,BigTableLinks}, consider \#4839; the two Tom formats Tom$_2 \bullet_{13,45}$ and Tom$_5 \bullet_{14}$ represent the same deformation family with Hilbert series \#4839, but with respect to the two centres $p \sim \frac{1}{5}(1,1,4)$ and $q \sim \frac{1}{9}(1,1,8)$. 
Thus, we have $M$ in Tom$_2 \bullet_{13,45}$, $M'$ in Tom$_5 \bullet_{14}$, and $N$ as in \cite[Table 2]{BrownKasprzykQureshiP2xP2}. 

Consider the Type I centre $p \in X \subset \Proj^7(1_{x_1}, 1_{x_2}, 4_{x_3}, 5_s, 6_{y_4}, 7_{y_3}, 8_{y_2}, 9_{y_1})$ at the coordinate point $P_s$ having local coordinates $x_1,x_2,x_3$. 
The unprojection that produced such Type I centre started from $(Z,D)$ for $D = \Proj^2(1_{x_1}, 1_{x_2}, 4_{x_3})$ inside $Z = \left( \Pf_i(M)=0 \right)_{i=1}^5$ where $I_D \coloneqq \Span[y_1,y_2,y_3,y_4]$ and the grading of $M$ is below \eqref{matrices diagram}. 
The circled weights in \eqref{matrices diagram} mark the zero entries, and in purple are the entries that must be filled with polynomials not in $I_D$ as before. 
The matrix $N$ is built by ignoring the zero entries of $M$ and by placing the unprojection variable $s$ in the entry $n_{31}$ of $N$ (in a square box below). 

Call $N'$ the $3 \times 3$ matrix where the second and third row of $N$ have been swapped. The $n'_{33} = n_{23}$ entry of $N'$ (boxed) is therefore occupied by $y_1$, which is the unprojection variable when the Type I centre is $q \sim \frac{1}{9}(1,1,8)$ having local coordinates $x_1,x_2,y_2$. Projecting away from $q$ we retrieve $D' = \Proj^2(1_{x_1}, 1_{x_2}, 8_{y_2})$ sitting inside $Z' = \left( \Pf_i(M')=0 \right)_{i=1}^5$ where $I_{D'} \coloneqq \Span[x_3,s,y_3,y_4]$ and $M'$ is in Tom$_5 \bullet_{14}$ format (entries not in $I_{D'}$ marked in blue). 
\begin{equation} \label{matrices diagram}
\xymatrix@C=15pt{
	& {N \colon \left(
	\begin{array}{c c c}
	4 & 6 & 7 \\
	6 & 8 & 9 \\
	\boxed{5} & 7 & 8
	\end{array}
	\right)} \ar@{-->}[dl]_{\text{Gorenstein $p$-projection}}^{p \sim \frac{1}{5}(1,1,4)} \ar@{~>}[r] & {N' \colon \left(
	\begin{array}{c c c}
	4 & 6 & 7 \\
	5 & 7 & 8  \\
	6 & 8 & \boxed{9}
	\end{array}
	\right)} \ar@{-->}[dr]^{\text{Gorenstein $q$-projection}}_{q \sim \frac{1}{9}(1,1,8)} & \\
	{M \colon \left(\begin{array}{c c c c}
		\textcolor{purple}{4} & \circled{5} & 6 & 7 \\
		& \textcolor{purple}{6} & \textcolor{purple}{7} & \textcolor{purple}{8} \\
		& & 8 & 9 \\
		& & & \circled{10}
		\end{array}\right)} & & & {M' \colon \left(\begin{array}{c c c c}
		4 & 5 & \circled{5} & \textcolor{blue}{6} \\
		& 6 & 6 & \textcolor{blue}{7} \\
		& & 7 & \textcolor{blue}{7 }\\
		& & & \textcolor{blue}{8}
		\end{array}\right)}
}
\end{equation}
The codimension 4 Fano 3-fold $X$ realised as Type I unprojection of $(Z,D)$ lies in the same deformation family of the one realised as Type I unprojection of $(Z',D')$. 
We run the two birational links in \eqref{two links diagram} by blowing up $p$ and $q$, and using the two sets of equations for $X$ that we have just built using $\Proj^2 \times \Proj^2$ formats. We will give a detailed account on how to do so in the continuation of this explicit example in Section \ref{Example}.
\end{example}

\section{Very singular Fano hypersurfaces} \label{Very sing hypersurfaces}

The endpoints of the birational links we have constructed are Fano hypersurfaces in weighted $w\Proj^4$. Here we want to study their singularities, and discuss the generality of these hypersurfaces. 
Recall that, by Theorem \ref{hypersurf flips}, the birational links in this paper all present a hypersurface flip. This introduces a compound Du Val singularity on $Y_3$ induced by the local equation in the flipped locus of $\Psi_2$. 
Consequently, $X'$ has at least one compound Du Val singularity (possibly more, depending on whether the contracted locus of the divisorial contractions has a local equation too). 
\begin{theorem} \label{constant cDV}
	Let $X$ be a codimension 4 Fano 3-fold of second Tom type, and let $X$ be general in its Tom format. Assume that the birational link initiated by the Kawamata blowup of one of $X$'s Type I centres terminates with a Fano hypersurface $X'$. 
	Then, the compound Du Val singularities of $X'$ are constant when $X$ varies in its general Tom family. 
\end{theorem}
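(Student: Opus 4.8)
The plan is to make the compound Du Val singularities of $X'$ explicit at each distinguished point and to show that their type is read off from data that is invariant under general deformation of $X$ inside its Tom family. By Theorem \ref{hypersurf flips} the link carries a hypersurface flip $\Psi_2$, whose flipped locus contributes a compound Du Val point to $Y_3$, and hence to $X'$; by Theorems \ref{cons div contr} and \ref{sim div contr} the terminal contractions $\Phi_1', \Phi_2'$ (respectively $\Phi'$) are divisorial to points, each of which may carry a further compound Du Val singularity. The first step is therefore to list these distinguished points, which are the fixed coordinate points $P_{y_2}, P_{y_3}, P_s, \dots$ recorded in the last column of Table \ref{Endpoints table}, and to localise $X'$ at each of them. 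Since these points do not move as the coefficients of $X$ vary, the localisation is uniform across the whole family.

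At such a point I would eliminate the variables $s, t, y_3, x_3, \dots$ exactly as in the proofs of Theorems \ref{hypersurf flips}, \ref{cons div contr} and \ref{sim div contr}, using the pure monomials forced by the gradings \eqref{>>>T2}, \eqref{>>>T5}, \eqref{>=>T2}, \eqref{>=>T4} together with the unprojection relations $s y_i = g_i$. What remains is a single local analytic equation $F = 0$ cutting out $X'$ in a smooth ambient chart. The second step is to read off the compound Du Val type from the leading part of $F$: a $cA_n$ point is detected by a rank-two quadratic monomial $uv$ (descending from the minor relation $y_2 y_3 - y_1 y_4$, cf.\ the proofs of Theorems \ref{cons div contr} and \ref{sim div contr}) together with the monomial of minimal weighted degree in the transverse coordinates, and the integer $n$ is then determined by the weights of the ambient $w\Proj^4$ and the degree $d$ of $X'$ alone. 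Both of these are fixed across the entire Tom family, so the \emph{candidate} value of $n$ is constant by construction.

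The decisive point is genericity. The second Tom family is irreducible, its members being parametrised by the polynomial entries of $M$ (equivalently of $N$) subject to the linear Tom constraints of Definition \ref{Tom definition}. The coefficients of the monomials of $F$ responsible for the $cA_n$ behaviour are polynomial functions of these entries, so by upper semicontinuity of the Milnor number of a transverse surface section the singularity type can only degenerate on the proper closed subset where one of these coefficients vanishes. The type attained on the complementary dense open locus is thus the minimal, generic one, and a general $X$ lies in this locus. Hence for all general $X$ in the family the singularities of $X'$ are the same $cA_n$, with the values listed in Table \ref{Endpoints table}.

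The main obstacle is the second step combined with the nonvanishing input to the third: one must verify, format by format, that the Tom constraints do not force the leading coefficient of $F$ to vanish identically, so that the generic $cA_n$ genuinely occurs rather than a more degenerate singularity. Concretely, for each grading \eqref{>>>T2}--\eqref{>=>T4} one checks that the monomial of minimal transverse weighted degree survives the eliminations with a coefficient that is not identically zero on the Tom locus; this is precisely where the zero entries marked by the bullet notation and the distinction between first and second Tom format intervene, and it is most transparently handled by the explicit computation carried out in Section \ref{Example}.
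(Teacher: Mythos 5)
Your skeleton (localise $X'$ at the distinguished points, eliminate $s,t,y_1,\dots$ via the unprojection equations and the forced pure monomials, read off the local equation) is the same as the paper's, but the argument you build on top of it has a genuine gap, in two related places. First, the semicontinuity/irreducibility step proves both too little and the wrong thing. Constancy of a constructible invariant on \emph{some} dense open subset of an irreducible parameter space is automatic, but the theorem asserts constancy over the generality locus fixed in advance by Definition \ref{Tom type def} (Tom constraints, reducedness of $D \subset Z$, quasi-smoothness of $X$), and what Theorem \ref{endpoints not isomorphic} and the pliability theorem actually consume is the \emph{identity} of the types ($cA_{d_4-1}$ at $P_{y_3}$ versus $cA_{d_2-1}$ at $P_s$, as in Table \ref{Endpoints table}). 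Moreover, semicontinuity bounds in the wrong direction for this purpose: it says the generic member has the \emph{mildest} singularity occurring in the family, so on its own it cannot exclude that the generic type at $P_{y_3}$ is $cA_1$, or that the point is generically smooth. Nothing in your first three paragraphs supplies the needed lower bound on how singular the general $X'$ is.

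Second, the shortcut you use to pin the index --- ``$n$ is determined by the weights of the ambient $w\Proj^4$ and the degree $d$ of $X'$ alone'' --- is false, and this is exactly where the content of the proof lives. For family \#4839, $X'$ is a degree $8$ hypersurface in $\Proj(1_{x_1},1_{x_2},1_{y_3},2_{y_2},4_{x_3})$; the monomials $x_1^a y_3^{8-a}$ are permitted by weights and degree for every $a \leq 8$, so after setting $y_3=1$ nothing numerical prevents a pure power $x_1^a$ with $a<6$, which would give a singularity strictly milder than the $cA_5$ of Table \ref{Endpoints table}. What pins $n=d_4$ is structural: the paper shows the equation of $X'$ is $\Pf_3(M)=0$ after the global eliminations, where quasi-smoothness of $X$ and reducedness of $D\subset Z$ force $p_1 = y_4 + x_1^{d_4} + h_1$ with $h_1$ containing (wlog) no further monomials of $I_D$, so that $t = x_1^{d_4}+h_1$ globally and the Tom$_2$ format excludes any smaller pure $x_1$-power, while $x_3y_2$ and $x_3^2$ (or a quadratic surrogate obtained by a local change of variables) supply the rank-two part. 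It is the presence of these forced monomials in \emph{every} general member --- not generic constancy on an irreducible family --- that yields the theorem. So the ``format by format'' verification you defer to your final paragraph is not a supplement to your argument; it is the proof itself, and replacing it by the erroneous numerical claim leaves the statement unproved.
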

\begin{proof}
	Suppose the Type I centres are $p,q \in X$ as before. 
	For simplicity, we focus on the link initiated by blowing up $p$; the proof for the link corresponding to blowing up $q$ is analogous.
	Since we consider only weights $d_1 > d_2 > d_3 > d_4$ and $d_1 > d_2 = d_3 > d_4$, the second Tom formats we have are Tom$_2$, Tom$_5$, or Tom$_4$. Firstly, consider the case in which $d_1 > d_2 > d_3 > d_4$. 
	To fix ideas, suppose that $p \in X$ is the cyclic quotient singularity with lowest index in the basket of $X$. So, by \cite{TJBigTable}, $M$ is in Tom$_2 \, \bullet_{13,45}$ format as follows
	\begin{equation*}
	\left(
	\begin{array}{c c c c}
	x_3 & 0 & y_4 & y_3 \\
	& p_1 & p_2 & p_3 \\
	& & y_2 & y_1 \\
	& & & 0
	\end{array}
	\right) 
	\end{equation*}
	for $p_1,p_2,p_3 \not \in I_D$ homogeneous polynomials of prescribed degree as in \eqref{>>>T2}. 
	
	The equations of $X'$ can be found by imposing $y_4 = 1$ (e.g.~blowing down) in the equations of $Y_4$ (or $Y_3$ when $Y_4 = Y_3$); some coordinates (like $s$ via the unprojection equations) can be globally eliminated, and can be substituted in the rest of the equations of $Y_4 |_{y_4=1}$. 
	After the elimination of $s$, we are left with only the five pfaffian equations. In the cases we consider, the coordinates $t$ and $y_1$ can always be globally eliminated by $\Pf_5(M)$ and $\Pf_2(M)$ respectively for $M$ is in Tom$_2$ format. 
	The fifth pfaffian equation is $\Pf_5(M) = x_3 y_2 - 0 \cdot p_2 + y_4 p_1$. Here $p_1 \not \in I_D$ is a homogeneous polynomial of degree $d_4$ of the form $p_1 = y_4 + x_1^{d_4} + h_1$, where $h_1$ is a homogeneous polynomial of degree $d_4$, possibly in $I_D$ (also recall that all these second Tom type Fano 3-folds have Fano index 1, so we can assume that the weight of $x_1$ is 1). 
	The only monomials in $h_1$ that contribute to achieving quasi-smoothness of $X$ and reducedness of $D \subset Z$ are the pure monomials in $x_1,x_2,x_3$, and quasi-linear terms in $x_1,x_2,x_3$ (see \cite{CampoSarkisov}); indeed, monomials that are in terms of three or more variables vanish when in the restriction to $D$ when checking reducedness and quasi-smoothness. 
	Thus, we can assume without loss of generality that $h_1$ does not contain other monomials in $I_D$. As a consequence, in the pull-back of the pfaffian equations via $\Phi$, the only term in $\Pf_5(M)$ picking up a $t$ factor is $y_4$ appearing in $p_1$. Hence, the elimination of $t$ is global. 
	If we look at the second pfaffian instead, we have that $\Pf_2(M) = y_2 y_3 - y_1 y_4$; here the global elimination is evident.
	
	This leaves us with three pfaffian equations: $\Pf_1(M)$, $\Pf_3(M)$, and $\Pf_4(M)$. After replacing the global expressions of $t, y_1$, and since the equations are explicit, we can see that $\Pf_4(M)$ is always identically 0, and that $\Pf_1(M)$ is a $y_2$-multiple of $\Pf_3(M)$. Thus, the ideal of $\Pf_1(M), \Pf_3(M), \Pf_4(M)$ is actually only generated by $\Pf_3(M)$; this is the equation of the Fano (cf \cite[Lemma 4.8]{CampoSarkisov}) hypersurface $X'$. 
	Since $\Pf_3(M) = 0$ is an explicit equation, we can run a simple \verb|Magma| routine to find out where the compound Du Val singularities are on $X'$: it is enough to find the singular locus and its prime components. 
	For $d_1 > d_2 > d_3 > d_4$ and $M$ in Tom$_2$ format, then there is one compound Du Val singularity on $X'$ at the coordinate point $P_{y_3} \in X' \subset w\Proj^4$. 
	In order to understand what type of compound Du Val singularity there is at $P_{y_3}$, we look at $\Pf_3(M) = 0$ after replacing $t$ and $y_1$ with their local expressions. So, to $\Pf_3(M) = y_3 p_2(t,y_3,x_1,x_2,x_3) - y_4 p_3(t,y_2,x_1,x_2,x_3)$ we need to impose: $y_4=1$ to blow down; $y_3=1$ to localise at the singular point; $t = x_1^{d_4} + h_1$ for the global elimination of $t$. 
	Since $p_2 = y_3 + x_1^{d_3} + h_2$, the pullback via $\Phi$ and the localisation at $P_{y_3}$ imply that $\Pf_3(M) = t + x_1^{d_3} + h_2(t,y_3,x_1,x_2,x_3) - p_3(t,y_2,x_1,x_2,x_3)$. Thus, recalling the form of $\Pf_5(M), h_1$ the equation of $X'$ contains the monomials: $x_3 y_2$, $x_1^{d_4}$ (taking the smallest pure power of $x_1$), and $x_3^2$ (that appears in either $p_2$ or $p_3$ if $d_3,d_4 = 2 \cdot \wts(x_3)$). Note that, when $d_3,d_4 = 2 \cdot \wts(x_3)$ does not happen, we can still perform a local change of variables in a neighbourhood of $P_{y_3}$ to get two quadratic terms starting from $x_3 y_2$. 
	Locally at $P_{y_3}$ the equation of $X'$ is of the form $x^2 + x y + z^n$ plus higher order terms, which, by a change of variables, becomes the standard $x^2 + y^2 + z^n$ that expresses a compound Du Val singularity of type $cA_{n-1}$.

	The case of $d_1 > d_2 = d_3 > d_4$ is similar: the local expressions of $t, y_1$ are analogous to the case above, and $\Pf_3(M)$ is still the equation of $X'$. What changes is that the compound Du Val singularities are three: two at the coordinate points $P_{y_2}, P_{y_3}$, and one at a point $P$ indicated explicitly for each format in the right-most column of Table \ref{Endpoints table}. The singularity analysis is conducted separately at each point in a similar fashion as above (with a simple change of variables to make $P$ a coordinate point).

	In conclusion, the only constraints that we imposed to the generality of these $cA_n$ singularities are the following: the Tom constraints, the reducedness of $D \subset Z$, the quasi-smoothness of $X$. Thus, we conclude that the $cA_n$ singularities on $X'$ persist when $X$ varies in its Tom family. 
\end{proof}

\begin{remark}
	Note that there is possibly another compound Du Val singularity at orbifold points when $d_1 > d_2 > d_3 > d_4$ (more precisely, at the coordinate point $P_{y_2}$). Even though we do not study their singularity type here, their existence does not affect the statement of Theorem \ref{endpoints not isomorphic}. However, this could lead to yet another Mori fibre space birational to $X'$. Thus, the inequality in Theorem \ref{Pliability theorem} about the size of the pliability set of $X'$ could be strict. 
\end{remark}

\begin{proof}[Proof of Theorem \ref{endpoints not isomorphic}] 	
	The link initiated by $q \in X$ starts from $X$ in either Tom$_5$ or Tom$_4$ type. For simplicity we assume that $M'$ is in Tom$_5$ format; the proof for $M'$ is in Tom$_4$ format is analogous. 
	
	Recall from the proof of Theorem \ref{constant cDV} that the coordinate point $P_{y_3} \in X'$ has local equation $x^2 + y^2 + z^n$ where $n=d_4$; so, $P_{y_3}$ is a $cA_{d_4-1}$ singularity. Moreover, since the coordinates $t,s,y_1$ can be globally eliminated and $y_4$ has been set to 1, $X'$ sits inside the weighed projective space $\Proj^4(1_{x_1}, b_{x_2}, c_{x_3}, (d_2-d_4)_{y_2}, (d_3-d_4)_{y_3})$ (with a little abuse of notation we still use the $x_i,y_j$ coordinate notation here). 
	The proof of Theorem \ref{constant cDV} can be conducted similarly when considering the birational link initiated by blowing up the other Type I centre $q \in X$. 
	In this case, the globally eliminated coordinates are $y_1, y_2, y_3$ (where $y_1$ is the new unprojection variable coming from the Type I centre $q$), so the Fano hypersurface $X''$ is embedded in the weighed projective space $\Proj^4(1_{x_1}, b_{x_2}, c_{t}, (r-c)_{s}, (d_4-c)_{y_4})$. 
	As it was for $P_{y_3} \in X'$, the coordinate point $P_s \in X''$ is always a compound Du Val singularity with local equation of the form $x^2 + y^2 + z^{n'}$. 
	The equation of $X''$ is given by $\Pf_4(M')|_{x_3=1} = p_3 - s p_2 + y_4 p_3$, which contains the monomial $x_1^{d_2}$ coming from $p_2$. Note that $x_1^{d_2}$ is the smallest pure power of $x_1$ appearing in $\Pf_4(M')|_{x_3=1}$, because $p_3$ features $x_1^{d_1}$, and $d_1 > d_2$. 
	Thus, $n' = d_2$ and $P_s$ is a compound Du Val singularity of Type $cA_{d_2-1}$.  
	
	Note that the weighted projective spaces where  $X'$ and $X''$ are embedded have the same weights: indeed, in the cases we consider, $d_3-d_4 = r-c = 1$, and $d_2-d_4 = d_4 - c$.  
	
	The fourth pfaffian of $M'$ is $\Pf_4(M') = x_3 p_3 - s p_2 + y_4 p_1$, which is a homogeneous polynomial of degree $c + d_2$. The equation of $X''$ is given by $\Pf_4(M')_{x_3=1}$, which has therefore degree $d_2$ and is homogeneous in $\Proj^4(1_{x_1}, b_{x_2}, c_{t}, (r-c)_{s}, (d_4-c)_{y_4})$. 
	
	On the other hand, the third pfaffian of $M$ is $\Pf_3(M) = y_3 p_2 - y_4 p_3$, which is homogeneous of degree $d_4 + d_2$. The equation of $X'$ is $\Pf_3(M)_{y_4=1}$, whose degree is then $d_2$, and is homogeneous in  $\Proj^4(1_{x_1}, b_{x_2}, c_{x_3}, (d_2-d_4)_{y_2}, (d_3-d_4)_{y_3})$. 
	
	Thus, $X'$ and $X''$ are two Fano hypersurfaces having the same degree and sitting inside the same weighted $\Proj^4$. As a consequence, they have the same Hilbert series as in \cite{grdb}. However, they are not isomorphic because they have different compound Du Val singularities. 
\end{proof}

Now we have all the elements to prove our Main Theorem \ref{Pliability theorem}.
\begin{proof}[Proof of Theorem \ref{Pliability theorem}]
	In the cases we consider, the square-equivalence classes in the pliability definition reduce to isomorphism classes because $S = T = \{ \text{point} \}$. 
	Theorem \ref{endpoints not isomorphic} shows that the two birational links starting from $(X,p)$ and $(X,q)$ respectively terminate with two birational non-isomorphic Fano hypersurfaces $X', X''$ having the same Hilbert series but different compound Du Val singularities. 
	Hence, the pliability of $X'$ contains at least $X''$, so $|\mathcal{P}(X')| \geq 2$. 
\end{proof}

Lastly, we discuss the factoriality of the hypersurfaces we obtained. Factoriality is crucial for understanding how many compound Du Val singularities (including ordinary double points) a hypersurface can have in order to still have pliability $|\mathcal{P}| =1$. 
This is in relation to Conjecture 1.4 of \cite[extended version]{cortimella} and \cite{CheltsovFactorialityNodal3folds,CheltsovFactorial3foldsHypersurf,CheltsovParkFactorialHypersurfNodes, AbbanKaloghiros}.
Factoriality is necessary, but not sufficient, to prove the birational rigidity (i.e.~$|\mathcal{P}| =1$) of a hypersurface; in fact, the singular quartic 3-fold in \cite{cortimella} is factorial but has $|\mathcal{P}| =2$. 
The trend is that factoriality combined with a bounded number of ordinary double points gives birational rigidity.
What we show is that all our very singular Fano hypersurfaces are factorial and yet are non-rigid. 
In our case, the factoriality property still derives from the second Tom formats of $M, M'$, and therefore from the shape of the equations of $Z,Z'$. We check this in the next theorem. 
\begin{theorem} \label{factorial endpoints}
	The Fano 3-fold hypersurfaces $X'$ and $X''$ obtained as above are factorial. 
\end{theorem}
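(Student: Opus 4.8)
The plan is to prove factoriality locally-analytically and then globalise. Recall that a normal projective variety is factorial precisely when it is locally factorial, i.e. when every completed local ring $\widehat{\mathcal O}_{X',p}$ is a unique factorisation domain; equivalently $\mathrm{Cl}(X')=\mathrm{Pic}(X')$. By Theorems \ref{hypersurf flips}, \ref{cons div contr} and \ref{sim div contr} the variety $X'$ has only isolated singularities (the finitely many compound Du Val points of Table \ref{Endpoints table}, together with possible cyclic quotient points coming from the weights of $w\Proj^4$), and it is smooth elsewhere. Since $\widehat{\mathcal O}_{X',p}$ being a UFD implies $\mathcal O_{X',p}$ is, and since $\rho_{X'}=1$ (as noted after Theorem \ref{sim div contr}, so $\mathrm{Pic}(X')=\Z$), the theorem reduces to showing that each of these finitely many germs is analytically factorial, i.e. that the singularities contribute no defect to $\mathrm{Cl}(X')$.

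The core computation is at the compound Du Val points. By the proof of Theorem \ref{constant cDV}, after the change of variables used there the local equation of $X'$ at such a point has the form $uv+f(z,w)=0$, where $f$ is automatically reduced because the $cA_n$ point is isolated (a repeated factor of $f$ would make $uv+f$ singular along a curve). For such a $cA$-type germ the divisor class group is the free abelian group $\Z^{r-1}$, where $r$ is the number of irreducible factors of $f$ in $\C[[z,w]]$; in particular the germ is analytically factorial exactly when $f$ is irreducible. I would therefore read $f$ off the explicit surviving equation of $X'$ — the single Pfaffian $\Pf_3(M)$ in the $>>>$ case and $\Pf_4(M')$ in the $>=>$ case — and verify that the surviving pure power $x_1^{d_4}$ (respectively $x_1^{d_2}$) together with the isolating monomial assemble into an irreducible $f$. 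This is precisely the no-small-resolution / independent-conditions criterion of \cite{CheltsovFactorialityNodal3folds, CheltsovFactorial3foldsHypersurf, CheltsovParkFactorialHypersurfNodes}, and it is here that the second Tom format enters: the gradings \eqref{>>>T2}, \eqref{>>>T5}, \eqref{>=>T2}, \eqref{>=>T4} pin down the shape of the surviving equation, while the generality built into Theorem \ref{constant cDV} controls the coefficients.

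It remains to treat the cyclic quotient (orbifold) coordinate points. Here I would argue from the explicit gradings that the equation of $X'$ carries the appropriate pure power $x_i^{d/a_i}$ at each high-index coordinate point, so that $X'$ either avoids that point or meets it along a Gorenstein ($cA$-type) germ, which is then handled by the irreducibility criterion above; where no such point lies on $X'$, the only non-factorial-prone germs are the compound Du Val ones already disposed of. For the flagship entry $X'=X_4\subset\Proj^4$ (\#20521) the ambient space is smooth, no orbifold points occur, and the argument collapses to the purely $cA_n$ discussion, recovering the factoriality of the singular quartic of \cite{cortimella}.

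The hard part will be the irreducibility check at the compound Du Val points: unlike a single node, a $cA_n$ germ may be factorial or not according to how the isolating monomial interacts with the pure power (for instance $uv+w^2+z^n$ is factorial exactly when $n$ is odd), so the statement genuinely requires confirming that the second Tom format never forces a splitting of $f$. A secondary difficulty is verifying uniformly across all families of Table \ref{Endpoints table} that no non-factorial quotient germ survives on $X'$; I expect this to be settled case by case from \eqref{>>>T2}, \eqref{>>>T5}, \eqref{>=>T2}, \eqref{>=>T4}, since those weights dictate exactly which monomials appear at the coordinate points.
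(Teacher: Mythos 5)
Your opening reduction contains a fatal error: factoriality of $X'$ is \emph{not} equivalent to every completed local ring $\widehat{\mathcal O}_{X',p}$ being a UFD. Only one implication holds ($\widehat{\mathcal O}$ a UFD implies $\mathcal O$ a UFD); the converse fails, because Zariski-local factoriality is a strictly weaker, essentially global condition ($\Cl(X')=\operatorname{Pic}(X')$), and analytic branches through a singular point need not be germs of algebraic Weil divisors. This is not a pedantic distinction here, since the sufficient condition you propose to verify is actually \emph{false} for these hypersurfaces, so no amount of care in the irreducibility check can rescue the strategy. Concretely, take the paper's worked example in Section \ref{Example}: at the $cA_5$ point $P_{y_3}\in X'=X_8$, setting $y_3=1$ gives quadratic part $x_3^2-x_3y_2$ (rank 2), and after completing squares the germ becomes $uv+f(x_1,x_2)$ where the leading form of $f$ is the binary sextic $x_1^6-x_1^5x_2-x_2^6$. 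This sextic has six \emph{distinct} roots (its dehomogenisation $x_1^6-x_1^5-1$ is coprime to its derivative $x_1^4(6x_1-5)$), so $f$ splits into six distinct smooth analytic branches; by the very formula you quote, the analytic class group of the germ is $\Z^{5}$, not $0$. The same maximal analytic splitting occurs throughout Table \ref{Endpoints table}, precisely because the generality assumptions force the leading binary forms to have distinct roots. So $X'$ is factorial \emph{despite} its singular points being analytically non-factorial: the obstruction classes in the local analytic class groups are simply not hit by global algebraic divisors. Your plan would terminate with the discovery that $f$ is reducible, and no conclusion.

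The paper's proof works at exactly the level your approach misses: globally. It shows that $X'$ (and $X''$) contains no coordinate locus $(x=y=0)\subset w\Proj^4$, by exhibiting monomials in the surviving equation $\Pf_3(M)|_{y_4=1}$ (resp.\ $\Pf_4(M')|_{x_3=1}$) --- the pure power $x_1^{d_2}$, and suitable monomials in $x_2,x_3,y_3$ guaranteed by the second Tom format and the global elimination of $t$ --- that do not vanish on any such coordinate plane. Ruling out these surfaces rules out the candidate Weil divisors that could witness $\Cl(X')\neq\operatorname{Pic}(X')$; no claim about $\widehat{\mathcal O}_{X',p}$ is made, nor could one be, by the computation above. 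If you want to salvage a local-to-global framing, the correct target is to show that the nontrivial classes in the local analytic class groups at the $cA_n$ points do not lie in the image of the restriction from $\Cl(X')$, which is what the plane-avoidance argument accomplishes; proving the local rings are analytically factorial is both stronger than needed and false.
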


\begin{proof}
	We refer to the proofs of Theorems \ref{endpoints not isomorphic} and \ref{constant cDV} for the notation and the equations of $X'$ and $X''$. 
	For simplicity, we focus on $X'$; the proof for the hypersurface endpoint $X''$ is analogous. 
	To prove factoriality of $X'$ it is enough to show that they do not contain reducible curves such as $(x=y=0) \cap X'$ for some coordinates $x,y$ of $w\Proj^4$. 
	We look at the equation of $X'$, which is given by $\Pf_3(M)=0$ after evaluating at $y_4=1$ and replacing the expressions of the global elimination of $t, y_1$. 
	Recall that the polynomial $p_3$ contains $x_1^{d_2}$ without loss of generality. Therefore, whenever we cut $X'$ by $(x=y=0)$ with both $x,y \not= x_1$, $X'$ does not contain $(x=y=0)$. 
	Suppose instead that $x=x_1$; we have two possibilities. 
	If $y=x_2$ or $x_3$, since the global expression of $t$ comes from $\Pf_5(M)$, we have that at least one of the monomials $y_3^2 x_2^m, y_3^2 x_2^{m'}$ appear in $\Pf_3(M)$ for some $m, m'$. Thus, $X'$ does not contain $(x=y=0)$. 
	On the other hand, if $y=y_i$ for $i \in \{ 1, \dots, 4\}$, then $p_3$ contains at least one of $x_2^{m}, x_3^{m'}$ for $m \coloneqq d_2 - \wts(x_2)$ and $m' \coloneqq d_2 - \wts(x_3)$; so, $X'$ does not contain $(x=y=0)$. 
\end{proof}

\section{Example} \label{Example}

We run the explicit computation of the matrices $M, M', N, N'$ for the family $X$ with Hilbert series \#4839 expanding from Example \ref{4839 P2xP2} above. 
Moreover, we study the two links obtained by blowing up $X$ at its two Type I centres $p \sim \frac{1}{5}(1,1,4)$ and $q \sim \frac{1}{9}(1,1,8)$; this produces a diagram as in \eqref{two links diagram}. Then, we find the explicit equations of the endpoints $X,X''$ and we look at their compound Du Val singularities, following the proof of Theorem \ref{constant cDV}. 

Continuing Example \ref{4839 P2xP2} above, the $5 \times 5$ skew-symmetric matrix $M$ is 
\begin{equation*} 
\xymatrix@C=15pt{
	{M \coloneqq \left(\begin{array}{c c c c}
	\textcolor{purple}{x_3} & 0 & y_4 & y_3 \\
	& \textcolor{purple}{p_1} & \textcolor{purple}{p_2} & \textcolor{purple}{p_3} \\
	& & y_2 & y_1 \\
	& & & 0
	\end{array}\right)} & {\text{with grading}} & 
	{\left(\begin{array}{c c c c}
		\textcolor{purple}{4} & \circled{5} & 6 & 7 \\
		& \textcolor{purple}{6} & \textcolor{purple}{7} & \textcolor{purple}{8} \\
		& & 8 & 9 \\
		& & & \circled{10}
		\end{array}\right)} 
}
\end{equation*}
where $p_{5-i} = x_1^{d_i} + x_2^{d_i} + y_i + h_{5-i}(x_1,x_2,x_3)$ and $h_{5-i}$ is a homogeneous polynomial of degree $d_{5-i}$, $i=2,3,4$. 
In fact, we only need at least one of the $x_i$ to appear as a pure power in the polynomials $p_j$. 
The $\Proj^2 \times \Proj^2$ format $N$ for $(X,p)$ is therefore
\begin{equation*} 
\xymatrix@C=15pt{
	{N \coloneqq \left(
		\begin{array}{c c c}
		x_3 & y_4 & y_3 \\
		p_1 & y_2 & y_1 \\
		s & p_2 & p_3
		\end{array}
		\right)} & {\text{with grading}} & 
	{\left(
		\begin{array}{c c c}
		4 & 6 & 7 \\
		6 & 8 & 9 \\
		\boxed{5} & 7 & 8
		\end{array}
		\right)} 
} \; .
\end{equation*}
The other $\Proj^2 \times \Proj^2$ format $N'$, corresponding to $(X,q)$, is the matrix obtained by swapping the second and third row of $N$. With a Gorenstein projection from $q \in X$ we get
\begin{equation*} 
\xymatrix@C=15pt{
	{M' \coloneqq \left(\begin{array}{c c c c}
		x_3 & s & 0 & \textcolor{blue}{p_1} \\
		& y_4 & y_4 & \textcolor{blue}{p_2} \\
		& & y_3 & \textcolor{blue}{p_3}\\
		& & & \textcolor{blue}{y_2}
		\end{array}\right)} & {\text{with grading}} & 
	{\left(\begin{array}{c c c c}
		4 & 5 & \circled{5} & \textcolor{blue}{6} \\
		& 6 & 6 & \textcolor{blue}{7} \\
		& & 7 & \textcolor{blue}{7 }\\
		& & & \textcolor{blue}{8}
		\end{array}\right)}
} \; .
\end{equation*}
To perform the blow-ups of $X$ at $p$ and $q$, we follow \cite[Section 3.4]{CampoSarkisov}; thus, the proper transforms (after saturation by a $t$ factor) $Y_1$ and $W_1$ of $X$ via such blow-ups are respectively embedded into the rank 2 toric varieties
\begin{equation*} 
\xymatrix@C=15pt{
	{\mathbb{F}_1 \coloneqq \left(\begin{array}{c c | c c c c c c c}
		t & s & x_1 & x_2 & x_3 & y_1 & y_2 & y_3 & y_4 \\
		0 & 5 & 1 & 1 & 4 & 9 & 8 & 7 & 6 \\
		1 & 1 & 0 & 0 & 0 & -1 & -1 & -1 & -1
		\end{array}\right) \; ,} & 
	{\mathbb{F}_1' \coloneqq\left(\begin{array}{c c | c c c c c c c}
		t & y_1 & x_1 & x_2 & y_2 & y_3 & y_4 & s & x_3 \\
		0 & 9 & 1 & 1 & 8 & 7 & 6 & 5 & 4 \\
		1 & 1 & 0 & 0 & 0 & -1 & -1 & -1 & -1
		\end{array}\right)}
} \; .
\end{equation*}
For instance, for $p_1 = -x_1^6 + x_1^5 x_2 + x_2^6 + y_4$, $p_2 = x_1^6 x_2 - x_2^7 - y_3$, $p_3 = -x_1^4 x_3 - x_3^2 - y_2$
the equations of $Y_1$ and $W_1$ are

\noindent\begin{minipage}{.5\linewidth}
	\begin{equation*}
	\begin{cases*}
	-t x_3 y_3 - s y_4 + x_1^6 x_2 x_3 - x_2^7 x_3 =0 \\
	t y_4^2 + x_1^6 y_4 - x_1^5 x_2 y_4 - x_2^6 y_4 - x_3 y_2 =0 \\
	t^2 y_4^2 + t x_1^6 y_4 - t x_1^5 x_2 y_4 - t x_2^6 y_4 + s y_3 \\ 
	+ x_1^4 x_3^2 + x_3^3 =0 \\
	t y_3 y_4 + x_1^6 y_3 - x_1^5 x_2 y_3 - x_2^6 y_3 - x_3 y_1 =0 \\
	-t^2 y_3 y_4 + t x_1^6 x_2 y_4 - t x_1^6 y_3 + t x_1^5 x_2 y_3 \\
	- t x_2^7 y_4 + t x_2^6 y_3 - s y_2 + x_1^{12} x_2 -
	x_1^{11} x_2^2 \\
	- 2 x_1^6 x_2^7 	+ x_1^5 x_2^8 + x_2^{13} =0 \\
	-t x_2 y_3 y_4 + t y_2 y_4 - t y_3^2 + x_1^5 x_2^2 y_3 +
	x_1^4 x_3 y_4 \\
	+ x_2 x_3 y_1 + x_3^2 y_4 =0 \\
	t^2 y_2 y_4 + t x_1^6 y_2 - t x_1^5 x_2 y_2 + t x_1^4 x_3 y_4 -
	t x_2^6 y_2 \\ 
	+ t x_3^2 y_4 + s y_1 + x_1^{10} x_3 -
	x_1^9 x_2 x_3 + x_1^6 x_3^2 \\
	- x_1^5 x_2 x_3^2 -
	x_1^4 x_2^6 x_3 - x_2^6 x_3^2 =0 \\
	-y_1 y_4 + y_2 y_3 =0 \\
	-t y_1 y_3 + t y_2^2 + x_1^6 x_2 y_1 + x_1^4 x_3 y_2 \\
	- x_2^7 y_1 + x_3^2 y_2 =0
	\end{cases*}
	\end{equation*}
\end{minipage}%
\begin{minipage}{.5\linewidth}
	\begin{equation*}
	\begin{cases*}
	-y_3 x_3 + y_4 s =0 \\
	t y_4^2 + x_1^6 y_4 - x_1^5 x_2 y_4 - x_2^6 y_4 - y_2 x_3 =0 \\
	t^2 x_3^3 + t x_1^4 x_3^2 - t y_3 s + x_1^6 x_2 s - x_2^7 s + 2 y_2 x_3 =0 \\
	t y_3 y_4 + x_1^6 y_3 - x_1^5 x_2 y_3 - x_2^6 y_3 - y_2 s =0 \\
	t^2 y_3 y_4 - t x_1^6 x_2 y_4 + t x_1^6 y_3 - t x_1^5 x_2 y_3 + t x_2^7 y_4 \\
	- t x_2^6 y_3 + y_1 x_3 - x_1^{12} x_2 + x_1^{11} x_2^2 
	\\+ 2 x_1^6 x_2^7 - x_1^5 x_2^8 - x_2^{13} =0 \\
	t^2 y_4 x_3^2 + t x_1^4 y_4 x_3 - t x_2 y_3 y_4 \\
	- t y_3^2 + x_1^5 x_2^2 y_3 + x_2 y_2 s + 2 y_2 y_4 =0 \\
	-t^2 x_1^6 x_3^2 + t^2 x_1^5 x_2 x_3^2 + t^2 x_2^6 x_3^2
	- t^2 y_3^2 - t x_1^{10} x_3 \\
	+ t x_1^9 x_2 x_3 + t x_1^6 x_2 y_3 
	+ t x_1^4 x_2^6 x_3 - 	t x_2^7 y_3 \\
	- y_1 s - 2 x_1^6 y_2 + 2 x_1^5 x_2 y_2 +
	2 x_2^6 y_2 =0 \\
	t y_2 y_3 + y_1 y_4 - x_1^6 x_2 y_2 + x_2^7 y_2 =0 \\
	-t^3 y_4^2 x_3 - t^2 x_1^6 y_4 x_3 + t^2 x_1^5 x_2 y_4 x_3 \\
	- t^2 x_1^4 y_4^2 +
	t^2 x_2^6 y_4 x_3 - t x_1^{10} y_4 \\
	+ t x_1^9 x_2 y_4 + t x_1^4 x_2^6 y_4 -
	y_1 y_3 - 2 y_2^2 =0
	\end{cases*}
	\end{equation*}
\end{minipage}
respectively. 
The birational links in \eqref{two links diagram} are obtained by variation of GIT quotient on $\mathbb{F}_1, \mathbb{F}_1'$ respectively, and by intersecting the contracted loci of the maps $\Psi_i, \Xi_i$ with $Y_1, W_1$ and their proper transforms (cf \cite{CampoSarkisov,BigTableLinks} for a more detailed description). 
Then, $\Psi_1$ is a flop of five rational curves and $\Psi_2$ is a hypersurface flip with weights $(9,1,1,-1,-2; 5)$; on the other hand, $\Xi_1$ flops nine rational curves and $\Xi_2$ is a hypersurface flip with weights $(1,1,8,-1,-2; 6)$. Both $\Psi_2$ and $\Xi_2$ are followed by two consecutive divisorial contractions to points. 

We carry out the explicit calculations to find the equation of $X'$; the one for $X''$ is analogous. 

The last divisorial contraction $\Phi_2'$ is a blow-down to a point in $\Proj^7(6_t,11_s,1_{x_1}, 1_{x_2}, 4_{x_3}, 3_{y_1}, 2_{y_2}, 1_{y_3})$ after setting $y_4=1$. 
Looking at the equations of $Y_4$ (that are the same as $Y_1$), we see that the coordinates $s, t, y_1$ can be globally eliminated respectively by the unprojection equations, $\Pf_5(M), \Pf_2(M)$. In particular, the (global) expression of $t, y_1$ is $t = - x_1^6 + x_1^5 x_2 + x_2^6 + x_3 y_2$, $y_1 = y_2 y_3$. 
This leaves us with three remaining equations: $ \Pf_1(M), \Pf_3(M), \Pf_4(M) =0 $ evaluated at the global expressions of $t, y_1$. 
It is easy to see that $\Pf_4(M)$ is now identically zero, and that $\Pf_1(M) = y_2 \Pf_3(M)$. Thus, the ideal $\Span[ \Pf_1(M), \Pf_3(M),\Pf_4(M)]$ is actually generated only by $\Pf_3(M)$. 
The equation of $X'$ is then $\Pf_3(M)=0$ after setting $y_4=1$ and replacing the global expressions of $t, y_1$. Hence, the degree 8 equation of $X' \subset \Proj^4(1_{x_1},1_{x_2},1_{y_3},2_{y_2},4_{x_3})$ is
\begin{equation*}
X' = \{ x_1^6 x_2 y_3 - x_2^7 y_3 - x_1^6 y_2 + x_1^5 x_2 y_2 + x_2^6 y_2 + x_3 y_2^2 + x_1^6 y_3^2 - x_1^5 x_2 y_3^2 - x_2^6 y_3^2 - x_3 y_2 y_3^2 +
x_1^4 x_3 + x_3^2 =0 \}  \; .
\end{equation*}
Analogously, the equation of $X'' \subset \Proj^4(1_{x_1},1_{x_2},1_{s},2_{y_4},4_{t})$ is
\begin{equation*}
X'' = \{ t^2 + t x_1^4 - t y_4 s^2 + x_1^6 x_2 s - x_2^7 s + 2 t y_4^2 + 2 x_1^6 y_4 - 2 x_1^5 x_2 y_4 - 2 x_2^6 y_4  =0 \}  \; .
\end{equation*}

The compound Du Val singularities of $X'$ lie at the coordinate points $P_{y_3}$ and $P_{y_2}$; the latter is also a cyclic quotient singularity, and we will only focus on the former. 
To see which kind of compound Du Val singularity $P_{y_3}$ is, we need to localise, i.e.~set $y_3=1$. We see that the lowest-power monomials are $x_3^2 + x_3 y_2 + x_1^6$, so $P_{y_3}$ is a $cA_5$ singularity. 

Instead, the compound Du Val singularities of $X''$ lie at $P_s$ and
$P_{y_4}$ (which is an orbifold point). Localising at $s=1$ we obtain the local equation $t^2 + t y_4 + x_2^7$, which is a $cA_6$ singularity. 

In conclusion we have just built two non-isomorphic $X_8$ hypersurfaces lying in the same birational class.

\subsection*{Declarations}

This work was supported by Korea Institute for Advanced Study, grant No. MG087901. The author has no relevant financial or non-financial interests to disclose.

\bibliography{bibliography}
\bibliographystyle{alpha}

\end{document}